\documentclass[11pt]{amsart}
\usepackage{amsmath}
\usepackage{amssymb}
\usepackage{amsthm}
\usepackage{latexsym}
\usepackage{pstricks}
\usepackage{amscd}

%


\date{}



%
%

%
%

%

\def\opn#1#2{\def#1{\operatorname{#2}}} 
%
\opn\chara{char} \opn\length{\ell} \opn\pd{pd} \opn\rk{rk}
\opn\projdim{proj\,dim} \opn\injdim{inj\,dim} \opn\rank{rank}
\opn\depth{depth} \opn\codepth{codepth} \opn\grade{grade}
\opn\height{height} \opn\embdim{emb\,dim} \opn\codim{codim}

\opn\Tr{Tr} \opn\bigrank{big\,rank}
\opn\superheight{superheight}\opn\lcm{lcm}
\opn\trdeg{tr\,deg}%
\opn\reg{reg} \opn\lreg{lreg} \opn\skel{skel} \opn\Gr{Gr}
\opn\dim{dim} \opn\arithdeg{arithdeg}

\opn\diam{diam}

%
\opn\div{div} \opn\Div{Div} \opn\cl{cl} \opn\Cl{Cl}
%
%
\opn\Spec{Spec} \opn\Supp{Supp} \opn\supp{supp} \opn\Sing{Sing}
\opn\Ass{Ass}
%
%
\opn\Ann{Ann} \opn\Rad{Rad} \opn\Soc{Soc}
%
%
\opn\Sym{Sym} \opn\Ker{Ker} \opn\Coker{Coker} \opn\Im{Im}
\opn\Hom{Hom} \opn\Tor{Tor} \opn\Ext{Ext} \opn\End{End}
\opn\Aut{Aut} \opn\id{id} \opn\ini{in} \opn\tr{tr}

\opn\nat{nat}\opn\it{it}
\opn\pff{proof}
\opn\Pf{proof} \opn\GL{GL} \opn\SL{SL} \opn\mod{mod} \opn\ord{ord}
%
%
\opn\aff{aff} \opn\con{conv} \opn\relint{relint} \opn\st{st}
\opn\lk{lk} \opn\cn{cn} \opn\core{core} \opn\vol{vol}
\opn\link{link} \opn\star{star} \opn\skel{skel}
\opn\cone{cone} \opn\star{star} \opn\skel{skel}

\opn\gr{gr}

%
%

\def\pot#1#2{#1[\kern-0.28ex[#2]\kern-0.28ex]}

%
%
\opn\dirlim{\underrightarrow{\lim}}
\opn\inivlim{\underleftarrow{\lim}}
%
%
%
\let\union=\cup

\let\ol=\overline
\let\wt=\widetilde

%
%

\def\Implies{\ifmmode\Longrightarrow \else
     \unskip${}\Longrightarrow{}$\ignorespaces\fi}
\def\implies{\ifmmode\Rightarrow \else
     \unskip${}\Rightarrow{}$\ignorespaces\fi}
\def\iff{\ifmmode\Longleftrightarrow \else
     \unskip${}\Longleftrightarrow{}$\ignorespaces\fi}

\let\:=\colon
\newtheorem{Theorem}{Theorem}[section]
\newtheorem{Lemma}[Theorem]{Lemma}
\newtheorem{Corollary}[Theorem]{Corollary}
\newtheorem{Proposition}[Theorem]{Proposition}

\newtheorem{Example}[Theorem]{Example}

%
%
\let\epsilon\varepsilon
\let\phi=\varphi
\let\kappa=\varkappa
%
%
\textwidth=15cm \textheight=22cm \topmargin=0.5cm
\oddsidemargin=0.5cm \evensidemargin=0.5cm \pagestyle{plain}
%
%
\def\qed{\ifhmode\textqed\fi
   \ifmmode\ifinner\quad\qedsymbol\else\dispqed\fi\fi}
\def\textqed{\unskip\nobreak\penalty50
    \hskip2em\hbox{}\nobreak\hfil\qedsymbol
    \parfillskip=0pt \finalhyphendemerits=0}
\def\dispqed{\rlap{\qquad\qedsymbol}}

%
%
%
\opn\Gin{Gin}

\def\GG{{\mathcal G}}
\def\BB{{\mathcal B}}
\def\A{{\mathcal A}}

\def\FF{{\mathcal F}}
\def\MM{{\mathcal M}}

\opn\inii{in} \opn\inim{inm} \opn\rate{rate}

\numberwithin{equation}{section}

\begin{document}
\title{Construction of Cohen-Macaulay binomial edge ideals}
\author[Asia Rauf]{Asia Rauf}

\author[Giancarlo Rinaldo]{Giancarlo Rinaldo}
\address{Asia Rauf, The Abdus Salam International Centre for Theoretical Physics (ICTP), Trieste-Italy.} \email{arauf@ictp.it, asia.rauf@gmail.com}
\address{Giancarlo Rinaldo, Dipartimento di Matematica,
Universit\`a di Messina,
Viale Ferdinando Stagno d'Alcontres, 31
98166 Messina, Italy.}
\email{giancarlo.rinaldo@tiscali.it}

\subjclass[2000]{Primary 13F55, Secondary 13H10}
\date{\today}
\keywords{Depth, Binomial ideal, Cohen--Macaulay}

\begin{abstract}
We discuss  algebraic and homological properties  of binomial edge ideals associated to  graphs which are obtained by gluing  of subgraphs and the formation of cones.
\end{abstract}

\maketitle

\section*{Introduction}

In this paper we study binomial edge ideals associated with  finite simple graphs. There are several natural ways to associate ideals with graphs and study their algebraic properties in terms of the underlying graphs. Let $K$ be a field. Classically one associates with  a graph $G$ on  the vertex set $[n]$ its edge ideal $I_G$ in the polynomial ring $K[x_1,\ldots,x_n]$ over  $K$. The ideal $I_G$   is generated by the monomials $x_ix_j$,  where $\{i, j\}$ is an edge of $G$.  Edge ideals of a graph have been introduced by Villarreal \cite{Vi2} in 1990, where he studied the Cohen-Macaulay property of such ideals. Many authors have focused their attention on such ideals (see for example \cite{SVV}, \cite{HH1}). 

In 2010, binomial edge ideals were introduced in \cite{HH} and appeared independently, but at the same time, also in \cite{MO}. Let $S = K[x_1,\cdots, x_n, y_1,\cdots, y_n]$ be the polynomial ring in $2n$ variables with coefficients in a field $K$. Let $G$ be a graph on vertex set $[n]$. For each edge $\{i,j\}$ of $G$ with $i < j$, we associate a binomial $f_{ij} = x_iy_j - x_jy_i$. The ideal $J_G$ of $S$ generated by $f_{ij} = x_iy_j - x_jy_i$ such that $i<j$ , is called \textit{the binomial edge ideal} of $G$.

Observe that any ideal generated  by a set of $2$-minors  of an  $2\times n$-matrix $X$ of indeterminates may be viewed as the binomial edge ideal of a graph. For example, the ideal of $2$-minors of $X$  is the binomial edge ideal of the complete graph on $[n]$.
The binomial edge ideal of a line graph is another example of binomial edge ideal. It is the ideal of all adjacent minors of $X$. This example appears the first time  in \cite{DES}. Algebraic properties of binomial edge ideals in terms of properties of the underlying graph were studied in \cite{HH}, \cite{CR} and \cite{HEH}. In \cite{HEH}, the authors considered some special classes of graphs and studied the Cohen-Macaulay property of these graphs.  However, the classification of Cohen-Macaulay binomial edge ideals in terms of the underlying graphs is still widely open, and it seems rather hopeless to give a full classification.

 Let $G$ be a graph with $G=G_1\cup G_2$ where $G_1$ and $G_2$ are two subgraphs of $G$.  In this paper we study the question under which conditions   $J_G$ is unmixed or Cohen-Macaulay, provided $J_{G_1}$ and $J_{G_2}$ have this property.

 Suppose $V(G_1)\cap V(G_2)=\{v\}$, where $v$ is free vertex of $G_1$ and $G_2$ (in the corresponding clique complexes). In this situation it is shown in Proposition \ref{prop:unmixedfree} and Theorem \ref{the:union} that $J_G$ is unmixed (Cohen-Macaulay) if and only if $J_{G_1}$ and $J_{G_2}$ are unmixed (Cohen-Macaulay).

 Now let $G=G_1\cup\ldots\cup G_r$, where the $G_i$ are subgraphs of $G$ with the property that
 \begin{enumerate}
 \item[(1)] $|V(G_i)\cap V(G_j)|\leq 1$ and $V(G_i)\cap V(G_j)\cap V(G_k)=\emptyset$ for $i\neq j\neq k$,

 \item[(2)] any  $v\in V(G_i)\cap V(G_j)$ with $i\neq j$ is a free vertex in the corresponding clique complex of $G_i$ and $G_j$.
 \end{enumerate}
 We associate with this decomposition of $G$ the graph $G^f$ with   vertex set   $\{1,\ldots, r\}$ and  edge set
 $\{\{i,j\}:V(G_i)\cap V(G_j)\neq\emptyset\}$.  As an application of Theorem \ref{the:union}, we show in Corollary \ref{cor:fvi} that $J_G$ is Cohen-Macaulay if and only if $J_{G_i}$ is Cohen-Macaulay for all $i$, provided $G^f$ is a tree.

 Let $H$ be a graph and let $G=\cone(v,H)$ be the cone on $H$. In Theorem~\ref{the:cone1conn}, it is shown that if $H$ is connected graph and $J_H$ is unmixed,  then $H$ is complete graph if and only if $J_G$ is unmixed. It is clear that $J_G$ is Cohen-Macaulay if $H$ is a complete graph.  Without assuming that $J_H$ is unmixed, we conjecture that $H$ is complete graph if $J_G$ is Cohen-Macaulay. By computer experiments, using CoCoA (see \cite{Co})  and Nauty (see \cite{Mk}), we verified this conjecture for all graphs with at most $9$ vertices.
 Now assume that $H$ has  $r\geq 2$ connected components. We show in  Lemma \ref{cor:only2} and Corollary \ref{cor:unmixed} that  $J_{G}$ is unmixed  if and only if $J_H$ is unmixed and $r=2$.  Moreover, we show in Theorem \ref{theo:cone2CM} that $J_G$ is Cohen-Macaulay if $J_{H}$ is Cohen-Macaulay. It is an open question whether the converse of this statement is true.

\section{Preliminaries}\label{sec:pre}
In this section we recall some concepts and a notation on graphs and on simplicial
complexes that we will use in the article.

Let $G$ be a simple graph with vertex set $V(G)$ and the edge set $E(G)$. A subset $C$ of $V(G)$ is called a \textit{clique} of $G$ if for all $i$ and $j$ belonging to $C$ with $i \neq j$ one has $\{i, j\} \in E(G)$. A vertex of a graph is called cut point if the removal of the vertex increases the number of connected components. Let $v\not\in V(G)$. The \textit{cone} of $v$ on $G$, namely $\cone(v,G)$, is the graph with vertices $V(G)\cup \{v\}$ and edges $E(G)\cup \{\{u,v\}:u\in V(G)\}$.

Let $G_1$ and $G_2$ be graphs. We set $G=G_1\cup G_2$  (resp. $G=G_1\sqcup G_2$ where $\sqcup$ is disjoint union) where $G$ is the graph with $V(G)=V(G_1)\cup V(G_2)$  (resp. $V(G)=V(G_1)\sqcup V(G_2)$) and $E(G)=E(G_1)\cup E(G_2)$ (resp. $E(G)=E(G_1)\sqcup E(G_2)$).

Set $V = \{x_1, \ldots, x_n\}$. A \textit{simplicial complex}
$\Delta$ on the vertex set $V$ is a collection of subsets of $V$
such that
\begin{enumerate}
\item[(i)] $\{x_i\} \in \Delta$  for all $x_i \in V$;
\item[(ii)] $F \in \Delta$ and $G\subseteq F$ imply $G \in \Delta$.
\end{enumerate}
An element $F \in \Delta$ is called a \textit{face} of $\Delta$.

A maximal face of $\Delta$  with respect to inclusion is called a \textit{facet} of $\Delta$.
A vertex $i$ of $\Delta$ is called a free vertex of $\Delta$ if $i$ belongs to exactly one facet.

If $\Delta$ is a simplicial complex  with facets $F_1, \ldots, F_q$, we call $\{F_1, \ldots, F_q\}$ the facet set of $\Delta$ and we denote it by $\FF(\Delta)$.

The \textit{clique complex} $\Delta(G)$ of $G$ is the simplicial complex whose facets are the cliques of $G$. Hence a vertex $v$ of a graph $G$ is called \textit{free vertex} if it belongs to only one clique of $\Delta(G)$.

We need notations and results  from \cite{HH} (section 3) that we recall for sake of completeness.

Let $T\subseteq [n]$, and let $\ol{T}=[n]\setminus T$. Let $G_1,\ldots,G_{c(T)}$ be the connected components of the induced subgraph on $\ol{T}$, namely $G_{\ol{T}}$. For each $G_i$, denote by $\wt{G}_i$ the complete graph on the vertex set $V(G_i)$. We set 
\begin{equation}\label{eq:prime}
P_T(G)=(\bigcup_{i\in T}\{x_i,y_i\},J_{\wt{G}_1},\ldots,J_{\wt{G}_{c(T)}} ), 
\end{equation}
that is a prime ideal. Then $J_G=\bigcap_{T\subset [n]}P_T(G)$. If there is no confusion possible, we write simply $P_T$ instead of $P_T(G).$ Moreover, $\height P_T=n+|T|-c(T)$ (see \cite[Lemma 3.1]{HH}).
 We denote by $\MM(G)$ the set of minimal prime ideals of $J_G$.

If each $i\in T$ is a cut point of the graph $G_{\ol{T}\cup \{i\}}$, then we say that $T$ has {\em cut point property for $G$.} We denote by $\mathcal{C}(G)$ the set of all $T\subset V(G)$ such that $T$ has cut point property for $G$.
\begin{Lemma}\label{lem:cutpoint}\cite{HH}
 $P_T(G)\in \MM(G)$ if and only if $T\in\mathcal{C}(G)$.
\end{Lemma}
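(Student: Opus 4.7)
The plan is to turn prime containments $P_{T'}\subseteq P_T$ into combinatorial statements about $T$ and $T'$. The key preliminary observation is that such a containment forces $T'\subseteq T$: modding out by the ideal $(x_j,y_j:j\in T)$, the image of $P_T$ becomes a sum of ideals of $2$-minors in pairwise disjoint variable sets, hence is generated in degree two and contains no variable. Therefore $x_j\in P_T\iff j\in T$, and likewise for $y_j$, so to test whether $P_T$ is minimal it suffices to compare it with the primes $P_{T'}$ for $T'\subsetneq T$.

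For the direction $T\in\mathcal{C}(G)\Rightarrow P_T\in\MM(G)$, I would argue by contradiction: assume $P_{T'}\subseteq P_T$ with $T'\subsetneq T$ and pick $i\in T\setminus T'$. Since $i$ is a cut point of $G_{\ol{T}\cup\{i\}}$, it has neighbors in at least two distinct connected components $G_1,G_2$ of $G_{\ol T}$. Because $\ol T\cup\{i\}\subseteq\ol{T'}$, the vertex $i$ together with $V(G_1)$ and $V(G_2)$ lies in a single connected component of $G_{\ol{T'}}$; hence for any $j_1\in V(G_1)$ and $j_2\in V(G_2)$ the binomial $f_{j_1 j_2}=x_{j_1}y_{j_2}-x_{j_2}y_{j_1}$ belongs to $P_{T'}$. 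On the other hand $j_1,j_2\notin T$ lie in distinct components of $G_{\ol T}$, and by the same mod-out argument this forces $f_{j_1 j_2}\notin P_T$, contradicting $P_{T'}\subseteq P_T$.

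For the converse $T\notin\mathcal{C}(G)\Rightarrow P_T\notin\MM(G)$, I would pick $i\in T$ which is not a cut point of $G_{\ol T\cup\{i\}}$ and show $P_{T\setminus\{i\}}\subsetneq P_T$. Setting $T'=T\setminus\{i\}$, the components of $G_{\ol{T'}}$ either include $\{i\}$ as a new isolated component (when $i$ has no neighbor in $\ol T$) or else enlarge exactly one component $G_1$ of $G_{\ol T}$ to $G_1\cup\{i\}$ (when $i$ has neighbors in only one component of $G_{\ol T}$). In the second case the new minor ideal on $V(G_1)\cup\{i\}$ is contained in $(J_{\wt G_1},x_i,y_i)\subseteq P_T$, since the extra binomials $f_{ki}=x_ky_i-x_iy_k$ with $k\in V(G_1)$ all lie in $(x_i,y_i)$. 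All other generators of $P_{T'}$ are already generators of $P_T$, so $P_{T'}\subseteq P_T$, and strictness follows from $x_i\in P_T\setminus P_{T'}$.

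The main obstacle is the combinatorial bookkeeping: one must describe precisely how the connected components of $G_{\ol T}$ transform when a single vertex is added to or removed from $T$, distinguishing the isolated-vertex case from the single-component case, and then match this combinatorial description with the generator-level inclusions of $2$-minor ideals. The algebraic input is elementary --- essentially that a $2$-minor ideal contains no linear form --- but it must be applied after the correct modding out by the appropriate variables.
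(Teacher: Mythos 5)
Your proof is correct. Note that the paper itself gives no proof of this lemma---it is quoted from \cite{HH}---and your argument is essentially the standard one from that reference: use the fact that the minimal primes of $J_G=\bigcap_{T'}P_{T'}$ are the inclusion-minimal members of $\{P_{T'}\}$, observe that $P_{T'}\subseteq P_T$ forces $T'\subseteq T$, and then translate the existence or non-existence of a strictly smaller $P_{T'}$ into the cut-point condition on each $i\in T$. The only step I would state more carefully is the claim that $f_{j_1j_2}\notin P_T$ when $j_1,j_2$ lie in distinct components of $G_{\ol T}$: after reducing modulo the variables indexed by $T$, this follows from the $\ZZ^{c(T)}$-grading in which the variables of the $m$-th component have degree $e_m$, so that $\sum_m J_{\wt{G}_m}$ is generated in degrees $2e_m$ and therefore contains no nonzero element of multidegree $e_1+e_2$; your phrase ``the same mod-out argument'' only rules out linear forms, not cross-component quadrics.
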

\section{Gluing of graphs and binomial edge ideals}
In this section we study unmixed and Cohen-Macaulay properties of the binomial edge ideal of a graph which is constructed by gluing two graphs with a free vertex that belongs to both graphs.

Since a binomial edge ideal $J_G$ is Cohen-Macaulay (resp. unmixed) if and only if  $J_H$ is Cohen-Macaulay (resp. unmixed) for each connected component $H$ of $G$,  we assume from now on that the graph $G$ is connected unless otherwise stated.

We begin by the following
\begin{Proposition}\label{prop:free0}
 Let $G$ be a graph,  $\Delta(G)$ its clique complex and $v\in V(G)$. The following conditions are equivalent:
\begin{enumerate}
 \item[(a)] There exists $T\in\mathcal{C}(G)$ such that $v\in T$;
 \item[(b)] $v$ is not a free vertex of $\Delta(G)$.
\end{enumerate}
\end{Proposition}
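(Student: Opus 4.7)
The plan is to prove the two implications separately, relying on the elementary graph-theoretic characterization that $v\in V(G)$ is a free vertex of $\Delta(G)$ if and only if the neighborhood $N_G(v)$ is a clique of $G$, equivalently every two neighbors of $v$ are joined by an edge. This follows because any maximal clique containing $v$ must be contained in $N_G(v)\cup\{v\}$, so uniqueness of such a clique is equivalent to $N_G(v)\cup\{v\}$ itself being a clique.

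For (a)$\Rightarrow$(b), I would take $T\in\mathcal{C}(G)$ with $v\in T$ and exploit the cut point property at $v$: the connected component of $v$ in $G_{\ol{T}\cup\{v\}}$ splits into at least two components in $G_{\ol{T}}$. Each resulting piece must contain at least one neighbor of $v$, since these pieces were attached to $v$ in the larger subgraph, so I can pick neighbors $u,w$ of $v$ lying in distinct components of $G_{\ol{T}}$. Because $u,w\in\ol{T}$, the edge $\{u,w\}$ (if it were present in $G$) would keep them in the same component of $G_{\ol{T}}$, a contradiction. Hence $\{u,w\}\notin E(G)$, so $N_G(v)$ fails to be a clique and $v$ is not free.

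For (b)$\Rightarrow$(a), I would start with non-adjacent neighbors $u,w$ of $v$ and construct $T$ explicitly. In $G-v$ the set $V\setminus\{u,v,w\}$ trivially separates $u$ from $w$, so I can choose a \emph{minimal} $u$-$w$ separator $T'\subseteq V\setminus\{u,v,w\}$ in $G-v$ (possibly $T'=\emptyset$ if $v$ already disconnects $u$ from $w$ in $G$). Setting $T:=T'\cup\{v\}$, I would then verify the cut point condition element by element. For $i=v$ the path $u$-$v$-$w$ lives in $G_{\ol{T}\cup\{v\}}$ while $T'$ separates $u$ and $w$ in $G_{\ol{T}}$. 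For each $i\in T'$, minimality of $T'$ produces a $u$-$w$ path inside $G_{\ol{T}\cup\{i\}}$, and that path is destroyed upon removing $i$ because $T'$ separates $u,w$ in $G_{\ol{T}}$. Consequently $T\in\mathcal{C}(G)$ and contains $v$.

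The main obstacle is the choice of $T$ in (b)$\Rightarrow$(a). The cut point property is required for \emph{every} element of $T$, so naive attempts such as $T=\{v\}\cup(V\setminus\{u,v,w\})$, which make $v$ trivially a cut point, collapse for any vertex of $T$ not adjacent to both $u$ and $w$. Thus one cannot simply ``dump'' extra vertices into $T$; the minimality of the separator $T'$ in $G-v$ is precisely the right calibration, being large enough to cut $u$ from $w$ once $v$ is removed yet tight enough to make each of its own elements into a cut point of the relevant induced subgraph.
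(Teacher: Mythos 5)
Your proof is correct. The (a)$\Rightarrow$(b) direction is essentially the paper's argument read contrapositively: if $v$ is free then its neighbourhood is a clique, so $v$ can never be a cut point of $G_{\ol{T}\cup\{v\}}$; you phrase this by extracting two neighbours of $v$ from distinct components of $G_{\ol{T}}$ and noting they cannot be adjacent. In the converse direction both you and the paper run a ``minimal separator has the cut point property'' argument, but the setups differ. The paper fixes two facets $F_i, F_j$ containing $v$, puts $T'=F_i\cap F_j$ (so $v\in T'$), and takes $T\supseteq T'$ minimal among the sets for which $F_i\setminus F_j$ and $F_j\setminus F_i$ lie in different components of $G_{\ol{T}}$; the vertices of $T'$ are cut points because they are adjacent to both of the separated cliques, and the remaining vertices are cut points by minimality. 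You instead pick two non-adjacent neighbours $u,w$ of $v$, take a minimal $u$--$w$ separator $T'$ of $G-v$, and set $T=T'\cup\{v\}$; then $v$ is a cut point because of the path $u,v,w$, and each $i\in T'$ is a cut point by minimality of the separator. Your version is a little more economical: it avoids the clique-complex bookkeeping and produces a $T$ containing only $v$ rather than all of $F_i\cap F_j$, whereas the paper's choice is tailored to the explicit description of $\mathcal{C}(G)$ it needs in the subsequent lemmas. One cosmetic remark: you silently use the characterization of cut points via the existence of a pair of vertices all of whose connecting paths pass through the given vertex; the paper states this fact (with a reference to Harary) right after this proposition, so you may as well invoke it explicitly.
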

\begin{proof}
 (a) \implies (b): Let $F_i$ be a facet of $\Delta(G)$ such that $v\in F_i.$ We want to show that $v$ is not a free vertex of $\Delta(G)$. Suppose on contrary that $v\notin F_j\in\Delta(G)$ for all facets $F_j\neq F_i$. Consider the graph $G_{\ol{T}\cup\{v\}}$. We claim that $v$ cannot be a cut point for the induced subgraph $G_{\ol{T}\cup\{v\}}$. In fact for any pair of edges $\{u,v\}$, $\{v,w\}\in E(G_{\ol{T}\cup\{v\}})$, we get $\{u,v\}$, $\{v,w\}\in F_i$. Therefore $\{u,w\}\in  E(G_{\ol{T}\cup\{v\}})$.

 (b) \implies (a): Let $F_i, F_j\in\FF(\Delta(G))$ such that $i\neq j$ and $v\in F_i\cap F_j$. We will show that there exists $T\in\mathcal{C}(G)$ with $v\in T$. Let $T'=F_i\cap F_j$ and let $F_i'=F_i\setminus F_j$ and $F_j'=F_j\setminus F_i$.
 Let $\mathcal C'=\{T\subset V(G)\mid T'\subset T,F_i' \text{ and }F_j' \text{ belong to different components of } G_{\ol{T}} \}.$ Then $\mathcal C'\neq\emptyset$, since $T=[n]\setminus (F_i'\cup F_j')\in \mathcal C'.$ Consider $T\in \mathcal C'$ such that $T$ is minimal with respect to inclusion of sets. We claim that $T$ has cutpoint property. For all $u\in T'$ this property is satisfied, since $u\in F_i\cap F_j.$ Suppose there exists a $u\in T\setminus T'$ such that $u$ is not cut point. Consider the set $T\setminus \{u\}$. We see that $T'\subset (T\setminus \{u\})$ and also $F_i'$ and $F_j'$ belong to different components of the induced graph $G_{\ol{T} \cup \{u\}}$, since $F_i'$ and $F_j'$ belong to different components of the graph $G_{\ol{T}}$. This implies that $T\setminus \{u\}\in\mathcal C'$, a contradiction.

\end{proof}
$H$ subgraph of $G$.
We use the following known fact in the proofs of Lemma \ref{lem:cutpoint pro} and Lemma \ref{lem:unfreev}. $v$ is a cut point of a graph $G$ if and only if there exist $u,w\in V(G)$ such that $v$ is in every path connecting $u$ and $w$ (see \cite[Theorem 3.1]{Ha}).
\begin{Lemma}\label{lem:cutpoint pro}
Let $G$ be a graph with $v\in V(G)$ such that $v$ is a free vertex in $\Delta(G)$, and let $F$ be the facet of $\Delta(G)$ with $v\in F$. Let $T\subset V(G)$ with $F\setminus\{v\}\not\subseteq T$. Then the following conditions are equivalent:
\begin{enumerate}
\item[(a)] $T\in\mathcal{C}(G)$;
\item[(b)] $v\not\in T$ and $T\in\mathcal C (G\setminus\{v\})$.
\end{enumerate}
\end{Lemma}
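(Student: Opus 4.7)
The plan is to reduce both directions to a short combinatorial observation. Since $v$ is a free vertex of $\Delta(G)$ lying in the unique facet $F$, its neighbors in $G$ are exactly the elements of $F\setminus\{v\}$, which form a clique. I would first establish the following pendant-clique lemma: for any induced subgraph $H$ of $G$ that contains $v$ and in which $v$ has at least one neighbor, $c(H)=c(H\setminus\{v\})$, where $c(\cdot)$ denotes the number of connected components. The reason is that the neighbors of $v$ in $H$ lie in the clique $F\setminus\{v\}$, hence are pairwise adjacent in $H$, so removing $v$ does not disconnect its component.

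For (a)$\Rightarrow$(b), Proposition \ref{prop:free0} immediately gives $v\notin T$. For every $i\in T$, I would apply the pendant-clique lemma to $H=G_{\ol{T}}$ and to $H=G_{\ol{T}\cup\{i\}}$. Both contain $v$ since $v\in\ol{T}$, and in each the assumption $F\setminus\{v\}\not\subseteq T$ supplies a neighbor $u\in(F\setminus\{v\})\cap\ol{T}$ of $v$. Consequently
\[
c(G_{\ol{T}})=c(G_{\ol{T}\setminus\{v\}}),\qquad c(G_{\ol{T}\cup\{i\}})=c(G_{(\ol{T}\cup\{i\})\setminus\{v\}}).
\]
Writing $\ol{T}'=V(G\setminus\{v\})\setminus T=\ol{T}\setminus\{v\}$, the cut-point inequality $c(G_{\ol{T}})>c(G_{\ol{T}\cup\{i\}})$ is therefore equivalent to $c((G\setminus\{v\})_{\ol{T}'})>c((G\setminus\{v\})_{\ol{T}'\cup\{i\}})$, which says exactly that $i$ is a cut point of $(G\setminus\{v\})_{\ol{T}'\cup\{i\}}$. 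Hence $T\in\mathcal{C}(G\setminus\{v\})$.

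For (b)$\Rightarrow$(a), I would run the same chain of equalities in reverse: $v\notin T$ is given, the same pendant-clique identities apply, and the cut-point condition for each $i\in T$ in $G\setminus\{v\}$ transfers to $G$.

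The only genuine obstacle is the pendant-clique lemma; everything else is bookkeeping of vertex sets. The hypothesis $F\setminus\{v\}\not\subseteq T$ enters precisely to guarantee that $v$ retains at least one neighbor in $G_{\ol{T}}$ (and therefore in $G_{\ol{T}\cup\{i\}}$), which is exactly what makes the pendant-clique lemma applicable; without it the component counts could differ and the equivalence would fail.
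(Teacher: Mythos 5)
Your proof is correct, and it takes a genuinely different (and in my view cleaner) route than the paper's. The paper invokes Harary's characterization of cut points (there exist $u_1,u_2$ such that every path joining them passes through the vertex) and then performs explicit path surgery in two separate case analyses, one for each implication. You instead unwind the definition of ``cut point'' directly as a strict drop in the component count, so that $i\in T$ being a cut point of $G_{\ol{T}\cup\{i\}}$ reads $c(G_{\ol{T}})>c(G_{\ol{T}\cup\{i\}})$, and you prove a single auxiliary fact: removing a simplicial vertex that still has a neighbor does not change the number of connected components. Since $F\setminus\{v\}\not\subseteq T$ supplies such a neighbor of $v$ in $G_{\ol{T}}$ (hence also in the larger induced subgraph $G_{\ol{T}\cup\{i\}}$), both inequalities transfer verbatim to $G\setminus\{v\}$, and the two implications become one biconditional chain. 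The underlying combinatorial engine is the same in both arguments --- a path entering and leaving $v$ can be shortcut across the clique $F$ --- but your packaging buys symmetry between the two directions and makes completely transparent where the hypothesis $F\setminus\{v\}\not\subseteq T$ is used (it fails exactly when $v$ becomes isolated in $G_{\ol{T}}$ but not in $G_{\ol{T}\cup\{i\}}$, which is precisely the situation of the paper's counterexample-free remark that (b)$\Rightarrow$(a) survives without the hypothesis). The only place to be slightly more explicit is the pendant-clique lemma itself: to see that deleting $v$ does not disconnect its component, note that any path through $v$ in $H$ has the form $\ldots,v',v,v'',\ldots$ with $v',v''\in F\setminus\{v\}$ adjacent, so it can be rerouted as $\ldots,v',v'',\ldots$; this is exactly the paper's path-surgery step, localized to one lemma.
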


\begin{proof} (a) \implies (b): It follows from  Proposition~\ref{prop:free0}(a)\implies(b) that $v\not\in T$. Now let $u\in T$, then $u\neq v$. Let $G'=G\setminus\{v\}$. We want to show that $u$ is a cut point for the graph $H'=G'_{\ol{T}\cup\{u\}}$.

Since $T\in\mathcal C(G)$, it follows that  $u$ is a cut point of $H=G_{\ol{T}\cup\{u\}}$. This implies that there exist two vertices $u_1$, $u_2\in V(H)$ such that $u$ belongs to all the paths from $u_1$ to $u_2$. If $\{u_1,u_2\}\cap \{v\}=\emptyset$ then $u$ is also a cut point of $H'$. Let $u_1=v$. Consider a path
\[
 \pi=v,v',\ldots,u_2.
\]
If $v'\neq u$, then also all the paths from $v'$ to $u_2$ pass through $u$, and we are done.
If $v'=u$,  then since $T\nsupseteq F\setminus \{v\}$ there exists $v''\in F\setminus (T\cup \{v\})$ with $\{v,v'\}\cap \{v''\}=\emptyset$. It follows that all the paths from $v''$ to $u_2$ pass through $u$. This shows that $T\in\mathcal C(G\setminus\{v\})$.

(b)\implies (a): Consider $u\in T$. We claim that $u$ is cut point of $H=G_{\ol{T}\cup\{u\}}$. Suppose on the contrary that $u$ is not a cut point of $H$. Since $u$ is a cut point of $H'=G'_{\ol{T}\cup\{u\}}$, there exist two vertices $u_1$, $u_2\in V(H')$ such that all the paths from $u_1$ to $u_2$ pass through $u$. Moreover there exists a path
\[
 \pi=u_1,\ldots,v,\ldots,u_2
\]
in $H$ that does not pass through $u$. This implies that $\{v',v\},\{v'',v\} \in E(H)$ and
\[
 \pi=u_1,\ldots,v',v,v'',\ldots,u_2.
\]
Since $F$ is a clique, $\{v',v''\}\in E(H)$ and $\{v',v''\}\in E(H')$. Hence we can obtain a new path
\[
 \pi'=u_1,\ldots,v',v'',\ldots,u_2.
\]
that is contained in $H'$ and does not pass through $u$, a contradiction.
\end{proof}
Note that (b) implies (a) even without assuming that $F\setminus\{v\}\not\subseteq T$.
\begin{Lemma}\label{lem:unfreev}
Let $G=G_1\cup G_2$ be a graph such that $V(G_1)\cap V(G_2)=\{v\}$ and $v$ be a free vertex of $\Delta(G_1)$ and $\Delta(G_2)$. Let $v\in F_1\in \Delta(G_1)$, $v\in F_2\in \Delta(G_2)$. Then
\[
\begin{array}{rcl}
 \mathcal C(G) =\A\union\BB,
\end{array}
\]
where
\[
\A= \{T\subset V(G):T=T_1\cup T_2, T_i\in\mathcal C(G_i)\text{ for }i=1,2\},
\]
and
\[
\BB= \{T\subset V(G):T=T_1\cup T_2\cup\{v\}, T_i\in\mathcal C(G_i)\text { and } F_i\nsubseteq T_i\cup \{v\}\text { for } i=1,2\}.
\]
\end{Lemma}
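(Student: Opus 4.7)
Prove the two inclusions $\A\cup\BB\subseteq\mathcal C(G)$ and $\mathcal C(G)\subseteq\A\cup\BB$ separately. The structural fact exploited throughout is that since $V(G_1)\cap V(G_2)=\{v\}$, any simple path in $G$ joining a vertex of $V(G_1)\setminus\{v\}$ to one of $V(G_2)\setminus\{v\}$ must contain $v$, and no simple path visits $v$ twice. Consequently, removing $v$ splits $G$ into the disjoint induced subgraphs on $V(G_1)\setminus\{v\}$ and $V(G_2)\setminus\{v\}$.

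For $\A\cup\BB\subseteq\mathcal C(G)$: let $T=T_1\cup T_2\in\A$. Proposition~\ref{prop:free0} applied to the freeness of $v$ in each $\Delta(G_i)$ forces $v\notin T_i$, hence $v\notin T$. For $u\in T_1$ (the $T_2$ case is symmetric), choose witnesses $u_1,u_2$ in distinct components of $(G_1)_{\ol{T_1}\cup\{u\}}\setminus\{u\}$; they remain in distinct components of $G_{\ol{T}\cup\{u\}}\setminus\{u\}$ because any joining path is forced to stay in $G_1$ (leaving through $v$ into $G_2$ would require a second visit to $v$ to return, even when one of $u_1,u_2$ equals $v$). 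If instead $T=T_1\cup T_2\cup\{v\}\in\BB$, the assumption $F_i\not\subseteq T_i\cup\{v\}$ produces a surviving neighbor of $v$ on each side, so deleting $v$ from $G_{\ol{T}\cup\{v\}}$ disconnects these sides and makes $v$ a cut point; for $u\in T_1$, the graph $G_{\ol{T}\cup\{u\}}$ already splits as the disjoint union of its $G_1$- and $G_2$-parts, and Lemma~\ref{lem:cutpoint pro} applied inside $G_1$ (using $F_1\setminus\{v\}\not\subseteq T_1$) shows $u$ is a cut point of the $G_1$-part, hence of the whole.

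For $\mathcal C(G)\subseteq\A\cup\BB$: given $T\in\mathcal C(G)$, set $T_i:=(T\setminus\{v\})\cap V(G_i)$ and split cases. If $v\notin T$, then $T=T_1\cup T_2$; for each $u\in T_1$, the graph $G_{\ol{T}\cup\{u\}}$ is obtained by gluing $(G_1)_{\ol{T_1}\cup\{u\}}$ and $(G_2)_{\ol{T_2}}$ at $v$, and since removing $u\ne v$ cannot detach the $G_2$-side from $v$, any new component after the removal sits in the $G_1$-side, showing $T_1\in\mathcal C(G_1)$; symmetrically $T_2\in\mathcal C(G_2)$, so $T\in\A$. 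If $v\in T$, the cut point property of $v$ in $G_{\ol{T}\cup\{v\}}$ forces $F_i\not\subseteq T_i\cup\{v\}$ for both $i$: otherwise $v$'s only neighbors would lie in the clique $F_j\setminus(T\cup\{v\})$ for the other index $j$, and removing $v$ from that clique keeps it connected, contradicting that $v$ is a cut point. For $u\in T_1$ the graph $G_{\ol{T}\cup\{u\}}$ has its $G_1$- and $G_2$-sides already disconnected, so $u$ is a cut point of the $G_1$-side $(G_1\setminus\{v\})_{(\ol{T_1}\setminus\{v\})\cup\{u\}}$; thus $T_1\in\mathcal C(G_1\setminus\{v\})$, and the converse direction of Lemma~\ref{lem:cutpoint pro} (which the authors remark holds unconditionally) upgrades this to $T_1\in\mathcal C(G_1)$. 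Similarly for $T_2$, whence $T\in\BB$.

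The main obstacle is not any single hard step but the bookkeeping of vertex sets across the gluing: one must carefully check that forming the induced subgraph in $G$ commutes with the decomposition into $G_i$-parts, that the condition $F_i\not\subseteq T_i\cup\{v\}$ is exactly the obstruction to $v$ being ``isolated on one side,'' and that Lemma~\ref{lem:cutpoint pro} is invoked in each direction under its applicable hypotheses (the forward direction requires the facet-inclusion hypothesis, the reverse does not).
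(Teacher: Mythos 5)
Your proposal is correct and follows essentially the same route as the paper's proof: split on whether $v\in T$, use Proposition~\ref{prop:free0} to exclude $v$ from the $T_i$, exploit that every path between $V(G_1)\setminus\{v\}$ and $V(G_2)\setminus\{v\}$ passes through $v$, and invoke Lemma~\ref{lem:cutpoint pro} (in the appropriate direction, with the hypothesis $F_i\setminus\{v\}\not\subseteq T_i$ where needed) to transfer the cut point property between $G_i$ and $G_i\setminus\{v\}$. If anything, your write-up is slightly more explicit than the paper's at the points it declares ``clear,'' namely the equivalence $T\in\mathcal C(G)\iff T_i\in\mathcal C(G_i)$ when $v\notin T$ and the derivation of $F_i\nsubseteq T_i\cup\{v\}$ from $v$ being a cut point.
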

\begin{proof}
We first show that $\A\union\BB\subseteq\mathcal{C}(G)$. Let  $T=T_1\cup T_2$ where $T_i\in\mathcal C (G_i)$ for $i=1,2$. By Proposition \ref{prop:free0}, $v\notin T_1$ and $v\notin T_2$, since $v$ is a free vertex. It follows that $T\in\mathcal C(G)$ if and only if $T_i\in\mathcal C(G_i)$ for $i=1,2$. Therefore $\A\subset \mathcal C(G)$.

Now suppose $T=T_1\cup T_2 \cup\{v\}$ where each $T_i\in\mathcal C(G_i)$ and $F_i\nsubseteq T_i\cup \{v\}$ for $i=1,2$.
Consider the graph $G'=G_{\ol{T}\cup\{v\}}.$ Since $F_i\nsubseteq T_i\cup \{v\}$ for $i=1,2$ there exist $u_1\in F_1\cap G'$ and $u_2\in F_2\cap G'$ and obviously all the paths from $u_1$ to $u_2$ pass through $v$. Hence $v$ is a cut point of $G'$.

Let $u\in T$ with $u\neq v$. Then the graph $G_{\ol{T}\cup\{u\}}=G_1'\sqcup G_2'$ where $G_i'$ is a graph on the subset of $V(G_i)\setminus\{v\}$ for $i=1,2$. We may assume that $u\in T_1$ and in this case we get $G_1'=(G_1)_{\ol{T}_1\cup\{u\}}\setminus\{v\}$ and $G_2'=(G_2)_{\ol{T}_2}\setminus\{v\}.$ It follows that $u$ is a cut point of $G_{\ol{T}\cup\{u\}}$ if and only if $u$ is a cut point of $G_1'$. Since $u\in T_1$ is a cut point of $(G_1)_{\ol{T}_1\cup\{u\}}$, it is also a cut point of $G_1'$ by Lemma \ref{lem:cutpoint pro}.

In order to prove other inclusion, we consider $T\in\mathcal C(G)$. Suppose $v\not\in T$ and also $T=T_1\cup T_2$ where $T_i\subset V(G_i)$ for $i=1,2$. Also in this case it is clear that $T\in\mathcal C(G)$ if and only if $T_i\in\mathcal C(G_i)$ for $i=1,2$.

If $v\in T$, then $T=T_1\cup T_2 \cup\{v\}$ with $T_i=T\cap V(G_i) \setminus \{v\}$ for $i=1,2$. Since $T\in\mathcal C(G)$, $v$ is a cut point for the graph $G'=G_{\ol{T}\cup\{v\}}$. This implies that there exist $u, w\in V(G')$ such that $v$ belong to every path from $u$ to $w$. We have $\{u',v\},\{w',v\}\in E(G')$ such that $u'$ and $w'$ belong to a path connecting $u$ and $w$. We may assume $u'\in F_1$. Then $w'\in F_2$.  In fact if $u'$, $w'\in F_1$ then there exists a path not containing $v$ since $F_1$ is a clique. Therefore $F_i\nsubseteq T_i\cup\{v\}$ for $i=1,2$. Let $u\in T$ with $u\neq v$. We may assume that $u\in T_1$. Since the graph $G_{{\ol{T}}\cup\{u\}}$ has two disjoint subgraphs, one defined on the vertex set $V(G_1)\setminus \{v\}$ and the second one on the vertex set $V(G_2)\setminus \{v\}$,  we focus our attention on the graph  $(G_1)_{\ol{T}_1\cup\{u\}}\setminus\{v\}$.  By Lemma \ref{lem:cutpoint pro}, since $u$ is a cut point of $G_{{\ol{T}}\cup\{u\}}$ it follows that $u$ is a cut point of $(G_1)_{\ol{T}_1\cup\{u\}}\setminus\{v\}$.
\end{proof}
\begin{Corollary}\label{cor:hfv}
Let $G=G_1\cup G_2$ be a graph such that $V(G_1)\cap V(G_2)=\{v\}$ and $v$ be a free vertex of $\Delta(G_1)$ and $\Delta(G_2)$. Then
 \[
 \height P_T(G)=\height P_{T_1}(G_1)+\height P_{T_2}(G_2)
 \]
for all $T\in \mathcal C(G)$, $T_1\in \mathcal C(G_1)$ and $T_2\in \mathcal C(G_2)$ defined as in Lemma \ref{lem:unfreev}.
\end{Corollary}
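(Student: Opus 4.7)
The strategy is to apply the formula $\height P_T(G)=n+|T|-c(T)$ recalled after equation \eqref{eq:prime}, and split according to the two types of elements of $\mathcal C(G)$ described in Lemma~\ref{lem:unfreev}. Writing $n_i=|V(G_i)|$, note that $n=n_1+n_2-1$ because $V(G_1)\cap V(G_2)=\{v\}$. The plan is then to compute, in each of the two cases, both $|T|$ and the number $c(T)$ of connected components of $G_{\ol T}$ in terms of the corresponding data for $G_1$ and $G_2$, so that the desired identity follows from pure arithmetic.

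\emph{Case $T=T_1\cup T_2\in\A$.} Here $v\notin T$ (by Proposition~\ref{prop:free0}, since $v$ is free in both $\Delta(G_i)$), so the union is disjoint and $|T|=|T_1|+|T_2|$. Because the only common vertex of $G_1$ and $G_2$ is $v$, the graph $G_{\ol T}$ is obtained by gluing $(G_1)_{\ol{T_1}}$ and $(G_2)_{\ol{T_2}}$ at the single vertex $v$; hence exactly one component of each piece (the one containing $v$) merges into one, giving $c(T)=c(T_1)+c(T_2)-1$. Substituting into $\height P_T(G)=n+|T|-c(T)$ immediately yields the claimed additivity.

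\emph{Case $T=T_1\cup T_2\cup\{v\}\in\BB$.} Now $|T|=|T_1|+|T_2|+1$. Since $v$ is removed, $G_{\ol T}$ is the disjoint union of the graphs $(G_i)_{\ol{T_i}}\setminus\{v\}$ for $i=1,2$, and the key point is to show that for each $i$ one has $c\bigl((G_i)_{\ol{T_i}}\setminus\{v\}\bigr)=c(T_i)$. For this I use that $v$ is free in $\Delta(G_i)$: the neighbors of $v$ in $G_i$ form the clique $F_i\setminus\{v\}$, so in $(G_i)_{\ol{T_i}}$ the neighbors of $v$ lie in the clique $F_i\setminus(T_i\cup\{v\})$, which is nonempty by the assumption $F_i\not\subseteq T_i\cup\{v\}$. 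Any vertex in the component of $v$ reaches $v$ through such a neighbor, and all these neighbors are mutually adjacent in $F_i$, so deleting $v$ leaves this component connected. Consequently $c(T)=c(T_1)+c(T_2)$, and plugging into $\height P_T(G)=n+|T|-c(T)$ again gives the asserted equality.

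The routine part is the arithmetic bookkeeping; the one non-trivial ingredient is the combinatorial observation in the second case that a free vertex of $\Delta(G_i)$, when it has at least one neighbor surviving in $(G_i)_{\ol{T_i}}$, is never a cut vertex of its own component. This is exactly where the freeness hypothesis is used, and it is the only step that needs careful justification; everything else reduces to substituting the expressions for $|T|$ and $c(T)$ into the height formula.
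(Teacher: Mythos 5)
Your proposal is correct and follows essentially the same route as the paper: write $n=n_1+n_2-1$, split into the two cases $T\in\A$ and $T\in\BB$ from Lemma \ref{lem:unfreev}, compute $c(T)=c(T_1)+c(T_2)-1$ resp.\ $c(T)=c(T_1)+c(T_2)$, and substitute into $\height P_T=n+|T|-c(T)$. The only difference is that you spell out the justification (via the freeness of $v$ and the condition $F_i\not\subseteq T_i\cup\{v\}$) for the component counts, which the paper's proof leaves implicit.
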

\begin{proof}
Let $|V(G_1)|=m_1$ and $|V(G_2)|=m_2$. We have $|V(G)|=n=m_1+m_2-1$.
By Lemma \ref{lem:unfreev} there are two cases to consider. Let $T_1\in\mathcal C(G_1)$ and $T_2\in\mathcal C(G_2)$ and use the same notation of Lemma \ref{lem:unfreev}, we have:
 
Case $1.$ If $T\in\A$, that is $T=T_1\cup T_2$,  then $c(T)=c(T_1)+c(T_2)-1$.

 Case $2.$ If $T\in \BB$, that is $T=T_1\cup T_2\cup\{v\}$, then $c(T)=c(T_1)+c(T_2)$.

\noindent
Since $\height P_{T}=n+|T|-c(T)$, we easily obtain the required formula.
\end{proof}
The following property of unmixed binomial edge ideals is observed in \cite{HH} and \cite{HEH}.
 \begin{Lemma}\label{lem:unmcomp}
  Let $G$ be a graph. Then the following conditions are equivalent:
\begin{enumerate}
 \item[(a)] $J_G$ is unmixed;
 \item[(b)] for all $T\in \mathcal C(G)$, we have $c(T)=|T|+1$.
\end{enumerate}

 \end{Lemma}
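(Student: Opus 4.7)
The plan is to read off the equivalence directly from the primary decomposition $J_G=\bigcap_{T\in\mathcal C(G)}P_T(G)$ (Lemma \ref{lem:cutpoint}) together with the height formula $\height P_T=n+|T|-c(T)$ recalled in Section \ref{sec:pre}. Unmixedness of $J_G$ is by definition the statement that all minimal primes $P_T(G)$, $T\in\mathcal C(G)$, have the same height, so the game is simply to identify that common height.

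First I would observe that $\emptyset\in\mathcal C(G)$ vacuously (the cut point condition is empty), so $\emptyset$ is always among the indexing sets in the intersection. Since we have assumed $G$ connected, $c(\emptyset)=1$, and the height formula gives $\height P_\emptyset=n+0-1=n-1$. Consequently $\dim S/J_G\ge n+1$ and the minimum height of a minimal prime of $J_G$ is at most $n-1$; combined with the fact that $P_\emptyset$ is among the minimal primes, $J_G$ is unmixed if and only if $\height P_T=n-1$ for every $T\in\mathcal C(G)$.

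Next I would substitute into the height formula: the condition $\height P_T=n-1$ reads $n+|T|-c(T)=n-1$, equivalently $c(T)=|T|+1$. This gives (a)$\iff$(b) at once.

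There is no real obstacle here — the content is entirely packaged in Lemma \ref{lem:cutpoint} and the formula for $\height P_T$. The only point that deserves a brief mention is the standing assumption that $G$ is connected, which is what makes $\height P_\emptyset=n-1$ (rather than $n-c$ with $c$ the number of connected components); this is recorded at the start of the section and is therefore legitimate to invoke. The proof should thus be short, with the two directions handled uniformly by the single chain of equivalences above.
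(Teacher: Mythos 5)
Your proposal is correct and follows exactly the paper's own argument: both reduce unmixedness to the statement that every minimal prime $P_T$, $T\in\mathcal C(G)$, has the same height as $P_\emptyset=n-1$, and then read off $c(T)=|T|+1$ from the formula $\height P_T=n+|T|-c(T)$. Your extra remark about the standing connectedness assumption (needed for $c(\emptyset)=1$) is a fair point of care that the paper leaves implicit.
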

 \begin{proof}
 (a)\implies (b): Let $T\in\mathcal C(G)$. Since $J_G$ is unmixed and $\emptyset\in\mathcal C(G)$, $n+|T|-c(T)=\height P_T=\height P_\emptyset=n-1$ implies the required assertion.

(b) \implies  (a): We get $\height P_T=n+|T|-c(T)=n-1$ for all $T\in\mathcal C(G)$ by our assumption. Hence $J_G$ is unmixed.
\end{proof}

\begin{Proposition}\label{prop:unmixedfree}
Let $G=G_1\cup G_2$ such that $V(G_1)\cap V(G_2)=\{v\}$ and $v$ be a free vertex in $\Delta(G_1)$ and $\Delta(G_2)$. Then
$J_G$ is unmixed if and only if $J_{G_1}$ and $J_{G_2}$ are unmixed.
\end{Proposition}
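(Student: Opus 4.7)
The plan is to reduce the question to the combinatorial criterion provided by Lemma~\ref{lem:unmcomp} and then apply the structural description of $\mathcal C(G)$ from Lemma~\ref{lem:unfreev} together with the component counts already worked out in the proof of Corollary~\ref{cor:hfv}. Throughout, I will use that $v$ being a free vertex of $\Delta(G_i)$ forces $v\notin T_i$ for every $T_i\in\mathcal C(G_i)$, by Proposition~\ref{prop:free0}. Note also that $G_1,G_2$ are connected, since $G$ is assumed connected and $G_1\cap G_2=\{v\}$.

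For the forward direction, I would pick an arbitrary $T_1\in\mathcal C(G_1)$ and form $T:=T_1\cup\emptyset$. Since $\emptyset\in\mathcal C(G_2)$ trivially, $T\in\A\subseteq\mathcal C(G)$ by Lemma~\ref{lem:unfreev}. The Case~$1$ computation in the proof of Corollary~\ref{cor:hfv} gives $c_G(T)=c_{G_1}(T_1)+c_{G_2}(\emptyset)-1=c_{G_1}(T_1)$, because $G_2$ is connected. Since $J_G$ is unmixed, Lemma~\ref{lem:unmcomp} yields $c_G(T)=|T|+1=|T_1|+1$, hence $c_{G_1}(T_1)=|T_1|+1$. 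As $T_1$ was arbitrary, Lemma~\ref{lem:unmcomp} shows $J_{G_1}$ is unmixed, and symmetrically for $J_{G_2}$.

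For the backward direction, I would take an arbitrary $T\in\mathcal C(G)$ and split into the two cases supplied by Lemma~\ref{lem:unfreev}. If $T=T_1\cup T_2\in\A$, then $v\notin T_i$ (free-vertex property), so $|T|=|T_1|+|T_2|$, and the Case~$1$ formula combined with the unmixedness of $J_{G_i}$ gives
\[
c(T)=c(T_1)+c(T_2)-1=(|T_1|+1)+(|T_2|+1)-1=|T|+1.
\]
If instead $T=T_1\cup T_2\cup\{v\}\in\BB$, then again $v\notin T_i$, so $|T|=|T_1|+|T_2|+1$, and the Case~$2$ formula gives
\[
c(T)=c(T_1)+c(T_2)=(|T_1|+1)+(|T_2|+1)=|T|+1.
\]
In both situations the criterion of Lemma~\ref{lem:unmcomp} is met, so $J_G$ is unmixed.

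There is no real obstacle beyond bookkeeping: the only subtlety is making sure the shared free vertex $v$ is counted correctly, which is handled by Proposition~\ref{prop:free0} (guaranteeing $v\notin T_i$) and by the two different component-count formulas already recorded in the proof of Corollary~\ref{cor:hfv} (the $-1$ in Case~$1$ accounts for $v$ being identified between the two component counts, while in Case~$2$ removing $v$ genuinely disconnects the two pieces).
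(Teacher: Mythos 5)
Your proposal is correct and follows essentially the same route as the paper: both directions reduce to the criterion $c(T)=|T|+1$ of Lemma~\ref{lem:unmcomp}, use the decomposition $\mathcal C(G)=\A\cup\BB$ from Lemma~\ref{lem:unfreev}, and invoke the two component-count formulas $c(T)=c(T_1)+c(T_2)-1$ and $c(T)=c(T_1)+c(T_2)$ exactly as the paper does. Your treatment of the ``only if'' direction (taking $T=T_1\cup\emptyset$ and noting $c_{G_2}(\emptyset)=1$ by connectedness) is just a slightly more explicit version of the paper's ``we may assume $T=T_1$'' step.
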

\begin{proof}
Let $J_{G_1}$ and $J_{G_2}$ be unmixed. Let $T_1\subset V(G_1)$ and $T_2\subset V(G_2)$ such that $T_1\in \mathcal C(G_1)$, $T_2\in \mathcal C (G_2)$. By Lemma \ref{lem:unmcomp}  we get $c(T_1)=|T_1|+1$ and $c(T_2)=|T_2|+1$. Let $T\subset V(G)$ such that $T\in \mathcal C(G)$. This implies that either $T=T_1\cup T_2$ or $T=T_1\cup T_2\cup\{v\}$ with $F_i\setminus\{v\}\nsubseteq T_i$. If $T=T_1\cup T_2,$ then  we get $c(T)=c(T_1)+c(T_2)-1$ since $c(T_1)$ and $c(T_2)$ both count the connected component containing $v$. If $T=T_1\cup T_2\cup\{v\},$ then we have $c(T)=c(T_1)+c(T_2)$. In fact since $F_i\setminus\{v\}\nsubseteq T_i$ then $c(T_i)=c(T_i\cup\{v\})$ for $i=1,2$. It follows from both cases that $c(T)=|T|+1$.

Now we suppose that $J_G$ is unmixed, that is $c(T)=|T|+1$ for all $T\in \mathcal C(G).$ Consider $T=T_1\cup T_2$ where $T_1\in \mathcal C (G_1)$, $T_2\in \mathcal C (G_2)$. Since $\emptyset\in\mathcal C(G_i)$ for $i=1,2$, we may assume $T=T_1$. We get $c(T)=c(T_1)$ since $V(G_1)\cap V(G_2)=\{v\}$ and $v$ is a free vertex. It follows that $c(T_1)=|T_1|+1$ for all $T_1\in\mathcal C(G_1)$. Hence $J_{G_1}$ is unmixed. By symmetry, we obtain $J_{G_2}$ is unmixed, too.
\end{proof}

In \cite{HH1}, the authors introduce the {\em admissible path} in order to compute Gr\"obner bases of the binomial edge ideals. We will use this notion in the proof of Theorem \ref{the:union}. A path $i=i_0,i_1,\ldots,i_r=j$ in a graph $G$ is called {\em admissible}, if
\begin{enumerate}
 \item $i_k\neq i_\ell$ for $k\neq \ell$;
 \item for each $k=1,\ldots,r-1$ one has either $i_k<i$ or $i_k>j$;
 \item for any proper subset $\{j_1,\ldots,j_s\}$ of $\{i_1,\ldots,i_{r-1}\}$, the sequence $i,j_1,\ldots,j_s,j$ is not a path.
\end{enumerate}

\begin{Theorem}\label{the:union}
Let $G_1$ and $G_2$ be graphs such that $V(G_1)\cap V(G_2)=\{v\}$ and $v$ be a free vertex in $\Delta(G_1)$ and $\Delta(G_2)$, and let $G=G_1\cup G_2$. Then
\begin{equation}\label{eq:depth}
\depth S/J_G=\depth S_1/J_{G_1}+\depth S_2/J_{G_2}-2
\end{equation}
where $S_i=K[\{x_j,y_j:j\in V(G_i)\}]$ for $i=1,2$. In addition, $J_G$ is Cohen-Macaulay if and only if $J_{G_1}$ and $J_{G_2}$ are Cohen-Macaulay.
\end{Theorem}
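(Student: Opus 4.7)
The strategy is to prove the depth formula \eqref{eq:depth} directly; the Cohen--Macaulay equivalence then follows by combining \eqref{eq:depth} with Proposition \ref{prop:unmixedfree} and the dimension identity for unmixed binomial edge ideals.

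Set $R=K[x_v,y_v]$. The hypothesis $V(G_1)\cap V(G_2)=\{v\}$ yields $S=S_1\otimes_R S_2$ and $J_G=J_{G_1}S+J_{G_2}S$, so $S/J_G=(S/J_{G_1})\otimes_S(S/J_{G_2})=(S_1/J_{G_1})\otimes_R(S_2/J_{G_2})$. Since $S$ is a polynomial extension of each $S_i$,
\[
\depth_S(S/J_{G_i})=\depth_{S_i}(S_i/J_{G_i})+2(m_{3-i}-1),\qquad m_i=|V(G_i)|,
\]
and $\dim S=\dim S_1+\dim S_2-2$.

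The technical heart is the Tor vanishing $\Tor_p^S(S/J_{G_1},S/J_{G_2})=0$ for all $p\ge 1$. By flat base change along $S_i\hookrightarrow S$, this is equivalent to $\Tor_p^R(S_1/J_{G_1},S_2/J_{G_2})=0$, which is automatic for $p\ge 3$ because $R$ has global dimension $2$. For $p=1,2$ I will invoke the admissible-path Gr\"obner basis of $J_G$ from \cite{HH1}: relabel so that $v$ is the largest vertex. Every admissible path in $G$ is then either entirely contained in $G_1$, entirely in $G_2$, or crosses $v$; in the crossing case the freeness of $v$ in $\Delta(G_i)$ (which by Proposition \ref{prop:free0} says that $v\notin T$ for any $T\in\mathcal{C}(G_i)$, and which implies that the neighbours of $v$ in $G_i$ form the clique $F_i\setminus\{v\}$) forces the minimality condition (3) to split the path uniquely as an admissible path in $G_1$ ending at $v$ concatenated with an admissible path in $G_2$ starting at $v$. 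From this I will deduce $\ini(J_G)=\ini(J_{G_1})S+\ini(J_{G_2})S$, together with a monomial-level splitting of the syzygies that yields the Tor vanishing.

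Once the Tor vanishing is in place, the Auslander--Serre depth formula gives
\[
\depth_S(S/J_G)=\depth_S(S/J_{G_1})+\depth_S(S/J_{G_2})-\depth S,
\]
and combining with the depth computations above produces exactly \eqref{eq:depth}. For the Cohen--Macaulay equivalence: if both $J_{G_i}$ are CM, \eqref{eq:depth} yields $\depth(S/J_G)=\dim(S_1/J_{G_1})+\dim(S_2/J_{G_2})-2$; Proposition \ref{prop:unmixedfree} gives $J_G$ unmixed, whence Corollary \ref{cor:hfv} forces $\dim(S/J_G)$ to equal the same quantity, so $J_G$ is CM. Conversely, if $J_G$ is CM it is unmixed, so by Proposition \ref{prop:unmixedfree} both $J_{G_i}$ are unmixed, and \eqref{eq:depth} together with $\depth\le\dim$ applied to each factor forces $\depth(S_i/J_{G_i})=\dim(S_i/J_{G_i})$. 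The main obstacle is the Tor vanishing; this is the step at which the freeness hypothesis is genuinely used, and where the admissible-path description of the Gr\"obner basis of $J_G$ is essential.
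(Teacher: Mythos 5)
Your global strategy is legitimate and is in fact equivalent to the paper's: writing $S/J_G=(S_1/J_{G_1})\otimes_R(S_2/J_{G_2})$ with $R=K[x_v,y_v]$ and asking for $\Tor^R_p$-vanishing for $p=1,2$ is, by reduction to the diagonal, exactly the assertion that $x_v-x_{v'},\,y_v-y_{v'}$ is a regular sequence on $(S_1/J_{G_1})\otimes_K(S_2'/J_{G_2'})$ --- which is what the paper proves, by introducing the disjoint union $G'=G_1\sqcup G_2'$ with $v$ duplicated as $v'$, killing $y_v-y_{v'}$ by prime avoidance via Proposition \ref{prop:free0}, and killing $x_v-x_{v'}$ by an explicit Gr\"obner basis computation for $J_{G'}+(y_v-y_{v'})$. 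Your bookkeeping around the Auslander depth formula and the deduction of the Cohen--Macaulay equivalence from \eqref{eq:depth}, Proposition \ref{prop:unmixedfree} and Corollary \ref{cor:hfv} is correct.

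The gap is in the one step that carries all the content, and the concrete claim you make there is false: $\ini(J_G)\neq\ini(J_{G_1})S+\ini(J_{G_2})S$ in general. Take $i\in V(G_1)\setminus\{v\}$ and $j\in V(G_2)\setminus\{v\}$ both adjacent to $v$, with $i<j<v$ ($v$ largest, as you arranged). Then $i,v,j$ is an admissible path of $G$ (it is a path, $v>j$, and $\{i,j\}\notin E(G)$ since $E(G)=E(G_1)\cup E(G_2)$), and correspondingly $x_jf_{iv}-x_if_{jv}=x_vf_{ij}\in J_G$ has initial term $x_vx_iy_j$. This monomial lies in $\ini(J_G)$ but in neither $\ini(J_{G_1})S$ nor $\ini(J_{G_2})S$: every generator $u_\pi x_ky_l$ of $\ini(J_{G_1})$ is divisible by $y_l$ with $l\in V(G_1)$, while $j\notin V(G_1)$, and every generator of $\ini(J_{G_2})$ is divisible by $x_k$ with $k\in V(G_2)$ and $k<l\le v$, hence $k\notin\{v,i\}$. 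So the crossing admissible paths do split at $v$ as you say, but they contribute genuinely new generators to $\ini(J_G)$, and the initial ideal does not split as a sum. The weaker fallback --- proving the two initial ideals Tor-independent and deforming --- is also blocked, since both $\ini(J_{G_1})$ and $\ini(J_{G_2})$ have minimal generators divisible by $y_v$ (coming from admissible paths ending at $v$), so they are not Tor-independent monomial ideals. This is exactly why the paper runs the Gr\"obner argument on the separated graph $G'$, where no admissible path crosses from $G_1$ to $G_2'$, together with the single linear form $y_v-y_{v'}$; some such device, or a direct computation of $J_{G_1}S\cap J_{G_2}S$, is needed to establish the Tor vanishing, and as written your proposal does not supply it.
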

\begin{proof}
 Let $v'$ be a vertex such that $v'\not\in V(G)$. We define a graph $G'=G_1\sqcup G_2'$, where $G_1$ is the same graph as in the statement and $G_2'$ is the graph with  $V(G_2)=\{V(G_2)\setminus \{v\}\}\cup \{v'\}$ and $E(G_2')=E(G_2\setminus \{v\})\cup \{\{i,v'\}:\{i,v\}\in E(G_2)\}$.
Let $S=K[\{x_i,y_i:i\in V(G)\}]$ and let $S'=K[\{x_i,y_i:i\in V(G')\}]$. Hence
\[
 S'\cong S[x_{v'},y_{v'}].
\]
Let $l_x=x_v -x_{v'}$ and $l_y=y_v -y_{v'}$. Note that
\[
 S'/(J_{G'},l_x,l_y)\cong S/J_G.
\]
To obtain the statement it suffices to prove that the sequence $l_y$, $l_x$ is regular on $S'/J_{G'}$.
Firstly we show that $l_y$ is regular on $S'/J_{G'}$. Since
\[
 J_{G'}: l_y=\bigcap_{T\in \mathcal C(G')} (P_T:l_y),
\]
it is sufficient to verify that
\[
 P_T:l_y=P_T,\hspace{0.5cm}\text{ for all } \,\,\, T\in \mathcal C(G').
\]
We actually show that $l_y\not\in P_T$. Then this implies that $P_T:l_y=P_T$, because $P_T$ is a prime ideal. We have
\[
 P_T=(\bigcup_{i\in T}\{x_i,y_i\},J_{\wt{G'_1}},\ldots,J_{\wt{G'}_{c(T)}} ).
\]
By Proposition \ref{prop:free0} it follows that
\[l_y\notin (\bigcup_{i\in T}\{x_i,y_i\}),\]
since $v$ and $v'$ are free vertices. Since $l_y$ is a linear form it cannot be obtained by linear combination of quadratic elements in the set of generators of $P_T$, hence $l_y\not\in P_T$ as desired.

We claim that $(J_{G'},l_y):l_x=(J_{G'},l_y)$. We may assume that $V(G_1)=\{1,2,\ldots,n\}$,  $V(G_2)=\{n+1,\ldots,m+n\}$, $v=n$ and $v'=n+1$. For the proof of the claim we describe the Gr\"obner basis of $J_{G'}+(l_y)$. We fix a lexicographic ordering induced by the following order of the variables
\begin{equation}
\label{eq:order}
 x_1>x_2>\cdots >x_{n+m}>y_1>y_2\cdots >y_{n+m}.
\end{equation}
Given an admissible path
\[
 \pi:i=i_0,i_1,\ldots,i_r=j
\]
from $i$ to $j$ with $i<j$ we associate the monomial
\[
 u_\pi=(\prod_{i_k>j}x_{i_k})(\prod_{i_\ell<i}y_{i_\ell}).
\]
Then
\begin{equation}\label{eq:gb0}
 \mathcal{G'}=\{ u_\pi f_{ij}:\pi \mbox{ is an admissible path from $i$ to $j$}\}.
\end{equation}
is  a  Gr\"obner bases of $J_{G'}$ (see \cite{HH} and \cite{MO}).

\medskip
We claim that
\begin{multline}
\label{eq:gbly}
 \GG= \{l_y\}\cup \{ u_\pi f_{ij}:\pi \mbox{ is an admissible path from $i$ to $j\neq n$}\}\cup\\
 \cup \{ u_\pi (x_i y_{n+1} -x_n y_i):\pi \mbox{ is an admissible path from $i$ to $j=n$}\}.
\end{multline}
is a Gr\"obner basis of $J_{G'}+(l_y)$.

Let $\GG_0=\GG'\cup \{l_y\}$. By \eqref{eq:gb0} and Buchberger's criterion all the $S$-pairs of polynomials in $\GG'$ reduce to $0$. Hence we only have to consider the $S$-pairs
\[
 S(l_y,u_\pi f_{ij})
\]
for all $u_\pi f_{ij}\in \GG'$. If $y_n$ does not divide $\ini(u_\pi f_{ij})=u_\pi x_i y_j$ the $S$-pair reduces to $0$. If $y_n$ divides $u_\pi x_iy_j$ then $\pi$ is an admissible path of the connected graph $G_1$ and since $n$ is the maximum index, by definition of admissible path, $j=n$. Therefore
\[
 S(l_y,u_\pi f_{in})=-u_\pi (x_i y_{n+1} -x_n y_i),
\]
with $\ini(- u_\pi (x_i y_{n+1} -x_n y_i))= - u_\pi x_i y_{n+1}$.
We want to show that
\begin{equation}\label{eq:gb}
\begin{split}
 \GG_1 = \{l_y\}\cup \{ u_\pi f_{ij}:\pi \mbox{ is an admissible path from $i$ to $j$}\}\cup \\
 \cup \{ u_\pi (x_i y_{n+1} -x_n y_i):\pi \mbox{ is an admissible path from $i$ to $j=n$}\}.
\end{split}
\end{equation}
is a Gr\"obner basis of $J_{G'}+(l_y)$.  
Since $S(l_y,u_\pi f_{ij})$ reduce to $0$ by the binomials described in the third set of \eqref{eq:gb} and $S(u_\pi f_{ij},u_\sigma f_{kl})$ reduce to $0$ by the binomials described in the second set of \eqref{eq:gb} it remains to investigate the $S$-pairs of the form
\begin{enumerate}
 \item $S(u_\pi(x_i y_{n+1} -x_n y_i), u_\sigma(x_j y_{n+1} -x_n y_j))$ and
 \item $S(u_\pi(x_i y_{n+1} -x_n y_i), u_\sigma f_{kl})$.
\end{enumerate}

Case (1): If $i=j$ then the $S$-polynomial itself is $0$. If $i\neq j$, then
\[
 S(u_\pi(x_i y_{n+1} -x_n y_i), u_\sigma(x_j y_{n+1} -x_n y_j))=S(u_\pi f_{in}, u_\sigma f_{jn}),
\]
and the assertion follows since $\GG_1\supset \GG'$.

Case (2): If $\{k,l\}\cap\{i,n+1\}=\emptyset$ or $i=l$ then $\ini(x_i y_{n+1} -x_n y_i)$ and $\ini(f_{kl})$ form a regular sequence. Hence the corresponding $S$-pair reduces to $0$.
If $n+1\in \{k,l\}$, then $\sigma$ is an admissible path in $G_2$ and  $\ini(f_{kl})=x_{n+1}y_l$. Therefore in this case the initial monomials form a regular sequence, too.

It remains to consider the case $i=k$. We observe that there exists a monomial $w$ such that
\begin{equation}\label{eq:sp0}
 S(u_\pi(x_i y_{n+1} -x_n y_i), u_\sigma f_{il})=w (x_l y_{n+1}-x_n y_l),
\end{equation}
and
\begin{equation}\label{eq:sp1}
 S(u_\pi f_{in}, u_\sigma f_{il})= w f_{ln}
\end{equation}
Since \eqref{eq:sp1} reduces to $0$ in $\GG_1$, there exists $f\in \GG_1$ such that $\ini(f)$ divides $wx_l y_n$.

If $y_n$ divides $\ini(f)$ then $f=u_\tau f_{jn}$ and this implies that $f'=u_\tau(x_j y_{n+1}-x_n y_j)\in \GG_1$. Therefore
the remainder of $w f_{ln}$ with respect to $f$ is equal to the remainder of $w (x_l y_{n+1}-x_n y_l)$ with respect to $f'$ and reduce to $0$.

If $y_n$ does not divide $\ini(f)$ then $\ini(f)$ divides $w x_l$ and hence initial term of \eqref{eq:sp0}. That is the remainder of $w f_{ln}$ with respect to $f$ is 
\begin{equation}\label{eq:red2}
 w' f_{l'n}
\end{equation}
for some monomial $w'$, and at the same time the remainder of $w (x_l y_{n+1}-x_n y_l)$ with respect of $f$ is  
\begin{equation}\label{eq:red3}
 w' (x_{l'} y_{n+1}-x_n y_{l'}).
\end{equation}

By proceeding as before since $w' f_{l'n}$ is not zero and reduces to $0$ we can apply the same reduction step to $w' f_{l'n}$ and $w' (x_{l'} y_{n+1}-x_n y_{l'})$ following the arguments applied to the binomials in the second terms of equations \eqref{eq:sp0} and \eqref{eq:sp1}. Thanks to Buchberger's algorithm since \eqref{eq:red2} reduces to $0$ in a finite number of steps also \eqref{eq:red3} reduces to $0$ by the same number of steps.

Hence $\GG_1$ is a Gr\"obner basis and we can remove the reducible polynomials $u_\pi f_{ij}$ with $j=n$ since their initial terms are divisible by $\ini(l_y)=y_n$. The claim follows.

Therefore
\begin{equation}\label{eq:ini}
 \ini(J_{G'}+(l_y))=(y_n,u_\pi x_i y_j, u_\pi x_{i'} y_{n+1})\mbox{ with }i<j\neq n, i'<n.
\end{equation}

Suppose that $f\in(J_{G'}+l_y):l_x$, that is $f(x_n-x_{n+1})\in (J_{G'}+(l_y))$. This implies $\ini(f(x_n-x_{n+1}))=\ini(f)x_n\in \ini(J_{G'}+(l_y))$.
We observe that $x_n$ does not divide any monomial in the minimal set of generators of $\ini(J_{G'}+(l_y))$. In fact $i\neq n$ and $i'\neq n$ by \eqref{eq:ini}.
Let $\pi$ be an admissible path such that there exists $k$, with $1\leq k<r$ and $i_k=n$. Since $n$ is a free vertex in a clique $F\in \Delta(G')$, $\pi$ contains at least $2$ vertices $u$, $w\in F$ with $n\notin \{u,w\}$.
But since $\{u,w\}\in E(G)$ then $\pi$ is not admissible by condition (3). Hence $\ini(f)\in \ini(J_{G'}+(l_y))$. It easily follows that $f\in (J_{G'}+(l_y))$.
\end{proof}
Let $G=G_1\cup\cdots\cup G_r$ be a connected graph satisfying the following properties for all $i,j, k\in[r]$ which are pairwise different: 
\begin{enumerate}
  \item $|V(G_i)\cap V(G_j)|\leq 1$ and $V(G_i)\cap V(G_j)\cap V(G_k)=\emptyset$;
  \item If $V(G_i)\cap V(G_j)=\{v\}$ then $v$ is a free vertex in $\Delta(G_i)$ and $\Delta(G_j)$.
\end{enumerate}
  In order to characterize Cohen-Macaulay binomial edge ideals in this case, we associate with $G$  a graph $G^f$ whose  vertex set is $V(G^f)=\{1,\ldots, r\}$ and whose edge set is
 \[
 E(G^f)=\{\{i,j\}:V(G_i)\cap V(G_j)\neq\emptyset\}\}.
 \]
One can observe that $G^f$ is a connected graph since $G$ is a connected graph.
\begin{Corollary}\label{cor:fvi}
Let $G=G_1\cup\cdots\cup G_r$ be a connected graph satisfying properties $(1)$, $(2)$ and assume that the graph $G^f$ is a tree. If $S_i=K[\{x_j,y_j:j\in V(G_i)\}]$ for $i=1,\ldots,r$ then
\begin{equation}\label{eq:depthg}
\depth S/J_G=\depth S_1/J_{G_1}+\cdots +\depth  S_r/J_{G_r} -2(r-1).
\end{equation}
Moreover, $J_G$ is Cohen-Macaulay if and only if each $J_{G_i}$ is Cohen-Macaulay for $i=1,\ldots,r$.
\end{Corollary}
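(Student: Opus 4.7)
The plan is to induct on the number $r$ of pieces, using Theorem~\ref{the:union} as the engine. The base case $r=1$ is vacuous (the correction $-2(r-1)$ is zero) and the case $r=2$ is exactly Theorem~\ref{the:union}. For the inductive step, I would exploit the hypothesis that $G^f$ is a tree: any tree on at least two vertices has a leaf, so after relabeling we may assume that $r$ is a leaf of $G^f$. This means $V(G_r)\cap V(G_i)\neq\emptyset$ for exactly one index $i\in\{1,\ldots,r-1\}$, which by property $(1)$ consists of a single vertex $v$ that, by property $(2)$, is free in $\Delta(G_i)$ and $\Delta(G_r)$.

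The key technical check is that $G':=G_1\cup\cdots\cup G_{r-1}$ still satisfies all hypotheses and that Theorem~\ref{the:union} applies to the decomposition $G=G'\cup G_r$. Connectedness of $G'$ follows from the fact that $(G')^f$ is obtained from the tree $G^f$ by deleting the leaf $r$, hence is again a tree, in particular connected. Properties $(1)$ and $(2)$ for the decomposition $G'=G_1\cup\cdots\cup G_{r-1}$ are inherited verbatim from $G$. One only needs to verify that $V(G')\cap V(G_r)=\{v\}$ and that $v$ is a free vertex of $\Delta(G')$. The first point uses property $(1)$ together with the fact that $G_r$ meets only $G_i$. For the second point, the neighborhood of $v$ in $G'$ coincides with its neighborhood in $G_i$ (because $v\notin V(G_k)$ for $k\neq i,r$ and the $V(G_k)\cap V(G_\ell)$ are singletons not equal to $v$), so the cliques of $\Delta(G')$ containing $v$ are precisely those of $\Delta(G_i)$ containing $v$; freeness of $v$ in $\Delta(G_i)$ then transfers to freeness in $\Delta(G')$.

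With this verified, Theorem~\ref{the:union} gives
\[
\depth S/J_G = \depth S'/J_{G'} + \depth S_r/J_{G_r} - 2,
\]
where $S'=K[\{x_j,y_j:j\in V(G')\}]$, together with the statement that $J_G$ is Cohen--Macaulay if and only if $J_{G'}$ and $J_{G_r}$ are both Cohen--Macaulay. Applying the inductive hypothesis to $G'$ and substituting yields the desired formula \eqref{eq:depthg}, and iterating the Cohen--Macaulay equivalence produces the claimed characterization.

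I do not expect any single step to be genuinely hard; the only subtle point is confirming that the free-vertex hypothesis really does descend from $\Delta(G_i)$ to $\Delta(G')$ after contracting the leaf $G_r$. This is where property $(1)$, which forbids triple intersections and limits pairwise intersections to a single vertex, is essential, and it is worth writing out carefully before invoking Theorem~\ref{the:union}.
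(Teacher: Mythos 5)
Your proposal is correct and follows essentially the same route as the paper's own proof: induct on $r$, peel off a leaf of the tree $G^f$, and apply Theorem~\ref{the:union} to $G = G' \cup G_r$. The paper states this in three lines without verifying that the free-vertex and intersection hypotheses descend to $G'$; your careful check of that point (using property (1) to rule out triple intersections and to show the cliques of $\Delta(G')$ at $v$ are exactly those of $\Delta(G_i)$ at $v$) is exactly the detail the paper leaves implicit.
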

\begin{proof}
We use induction on $r$. We may assume that $r\geq 2$. Since $G^f$ is tree, there exists $G_i$ such that $v$ is a free vertex of $G'$ and $G_i$ where $G'=\bigcup_{j\neq i}G_j$. Moreover, $G'^f$ is again a tree. Applying the induction hypothesis and Theorem \ref{the:union}, the assertion follows.

\end{proof}
As a consequence of Corollary \ref{cor:fvi} we get the following

\begin{Corollary}\label{cor:cmequm}
Let $G=G_1\cup\ldots\cup G_r$ as above, and $G^f$ be a tree. Assume that $J_{G_i}$ is Cohen-Macaulay for $i=1,\ldots,r$. Then the following statements are equivalent:
\begin{enumerate}
  \item[(a)] $J_G$ is Cohen-Macaulay;
  \item[(b)] $J_G$ is unmixed.
\end{enumerate}
 \end{Corollary}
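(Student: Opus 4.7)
The plan is to observe that, under the stated hypotheses, both conditions (a) and (b) actually hold unconditionally, so the substance of the corollary is really just an application of the preceding Corollary \ref{cor:fvi}.

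First I would handle (a) $\Rightarrow$ (b), which is immediate: a Cohen--Macaulay quotient of a polynomial ring has all its minimal primes of the same height (namely the codimension of the ring), so Cohen--Macaulayness always implies unmixedness. This step uses none of the structural assumptions on $G$ and requires no calculation.

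For (b) $\Rightarrow$ (a), I plan to invoke Corollary \ref{cor:fvi} directly. The hypotheses of that corollary, namely the decomposition $G=G_1\cup\cdots\cup G_r$ with properties (1) and (2) together with $G^f$ being a tree, are precisely what is being assumed in the present corollary. Consequently, since $J_{G_i}$ is Cohen--Macaulay for each $i$ by hypothesis, Corollary \ref{cor:fvi} already forces $J_G$ to be Cohen--Macaulay, without any further appeal to unmixedness of $J_G$. Thus (a) holds, so the implication (b) $\Rightarrow$ (a) is in fact vacuously true: under the standing hypotheses, (a) and (b) both hold simultaneously, which is the strongest possible form of equivalence.

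There is no real obstacle to anticipate. The one thing to double-check before writing out the proof is that the phrase ``as above'' in the statement refers unambiguously to the setup of Corollary \ref{cor:fvi}, i.e.\ the glueing data $G=G_1\cup\cdots\cup G_r$ with free-vertex intersections along the tree $G^f$, so that the depth formula \eqref{eq:depthg} is available. Once this bookkeeping is settled, the argument is simply two lines: (a) $\Rightarrow$ (b) by general principles, and (b) $\Rightarrow$ (a) by Corollary \ref{cor:fvi}.
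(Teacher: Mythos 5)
Your proposal is correct, but it takes a genuinely different route from the paper's own proof of $(b)\Rightarrow(a)$. You simply invoke the ``Moreover'' clause of Corollary~\ref{cor:fvi}: since conditions (1), (2) and the tree hypothesis on $G^f$ are in force (your reading of ``as above'' is the right one --- the paper introduces the corollary with ``As a consequence of Corollary~\ref{cor:fvi}'' and its proof uses \eqref{eq:depthg}), the hypothesis that each $J_{G_i}$ is Cohen--Macaulay already forces $J_G$ to be Cohen--Macaulay, so both (a) and (b) hold unconditionally and the equivalence is vacuous. The paper instead uses only the depth formula \eqref{eq:depthg} from Corollary~\ref{cor:fvi} and makes genuine use of hypothesis (b): unmixedness pins down $\dim S/J_G=n+1$ (all minimal primes have the height of $P_\emptyset$), Cohen--Macaulayness of the pieces gives $\dim S_i/J_{G_i}=\depth S_i/J_{G_i}=m_i+1$, and the identity $n=\sum_i m_i-r+1$ for a tree then yields $\depth S/J_G=\sum_i(m_i+1)-2(r-1)=n+1=\dim S/J_G$. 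Your argument is shorter and exposes the logical redundancy of the statement as given; the paper's argument has the merit of showing that unmixedness plus the depth formula alone suffice, i.e.\ it does not lean on the ``if and only if'' part of Corollary~\ref{cor:fvi} (which ultimately rests on the regular-sequence machinery of Theorem~\ref{the:union}). Both are valid proofs of the stated equivalence.
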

\begin{proof}
The implication (a) \implies (b) is well-known.

(b) \implies (a): Let $|V(G_i)|=m_i$ and $|V(G)|=n$. Since $G^f$ is a tree, a simple induction argument shows that  $n=\sum_{i=1}^r m_i-r+1$.

Now suppose that $S_i=K[\{x_j,y_j:j\in V(G_i)\}]$ for $i=1,\ldots,r$. Then $\dim S/J_G=n+1$, because $J_G$ is unmixed. Since $J_{G_i}$ is Cohen-Macaulay, $\dim S_i/J_{G_i}=\depth S_i/J_{G_i}=m_i+1$ for $i=1,\ldots,r$. It follows from Equation (\ref{eq:depthg}) that
\[
\depth S/J_G=\sum_{i=1}^r (m_i+1)-2(r-1)=(\sum_{i=1}^rm_i-r+1)+1=n+1.
\]
\end{proof}

\begin{Lemma}\label{lem:intree}
 Let $G$ be a chordal graph and $G_1,\ldots,G_r$ be its maximal cliques of $G$.  Suppose that $G$ satisfies condition (1) with respect to $G_1,\ldots,G_r$. Then
 $G^f$ is a tree.
\end{Lemma}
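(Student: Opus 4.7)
The plan is to establish both connectedness and acyclicity of $G^f$. Connectedness is essentially by construction: any path in $G$ between a vertex of $G_i$ and a vertex of $G_j$ decomposes into edges, each edge lying in some maximal clique, and two consecutive edges in the path share a vertex which forces their maximal cliques to share that vertex and hence be adjacent in $G^f$. The bulk of the work will be in showing $G^f$ has no cycle, which I would argue by contradiction.

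Assume a shortest cycle $G_{i_1}, G_{i_2}, \ldots, G_{i_k}, G_{i_1}$ in $G^f$ of length $k \geq 3$. For each $j$ (indices mod $k$), the intersection $V(G_{i_j}) \cap V(G_{i_{j+1}})$ is nonempty by definition of $G^f$, and condition (1) forces it to be a singleton $\{v_j\}$. First I would verify that the $k$ vertices $v_1, \ldots, v_k$ are pairwise distinct: if $v_j = v_l$ with $j \neq l$, then this common vertex lies in the four cliques indexed by $i_j, i_{j+1}, i_l, i_{l+1}$, and one checks (using $k \geq 3$ and the fact that the $i_j$ are pairwise distinct along a cycle) that at least three of these indices are distinct, contradicting the triple-intersection clause of condition (1). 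Since $v_j$ and $v_{j+1}$ both belong to the clique $G_{i_{j+1}}$, they are adjacent in $G$, so $v_1, v_2, \ldots, v_k, v_1$ is a cycle of length $k$ in $G$.

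The final step splits into two cases. When $k \geq 4$, chordality of $G$ yields a chord $\{v_p, v_q\}$ with $p, q$ non-adjacent modulo $k$. This edge lies in some maximal clique $G_s$, and applying the triple-intersection condition at $v_p$ forces $G_s \in \{G_{i_p}, G_{i_{p+1}}\}$, while at $v_q$ it forces $G_s \in \{G_{i_q}, G_{i_{q+1}}\}$; the non-adjacency of $p, q$ along the cycle makes the four indices $i_p, i_{p+1}, i_q, i_{q+1}$ pairwise distinct, so the two constraint sets are disjoint, a contradiction. The case $k = 3$ is the main subtlety, since a triangle has no chord and chordality offers nothing directly. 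Here I would exploit the stronger fact that $\{v_1, v_2, v_3\}$ is itself a clique (all three edges are present), hence contained in some maximal clique $G_s$; applying the same triple-intersection argument at each $v_j$ gives $G_s \in \{G_{i_1}, G_{i_2}\} \cap \{G_{i_2}, G_{i_3}\} \cap \{G_{i_3}, G_{i_1}\} = \emptyset$, the final contradiction.
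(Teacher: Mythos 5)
Your proof is correct and follows essentially the same route as the paper: lift the minimal cycle of $G^f$ to the cycle $v_1,\dots,v_k$ in $G$ through the intersection vertices, invoke chordality, and contradict the triple-intersection clause of condition (1) via the maximal clique containing a chord (for $k\geq 4$) or the whole triangle (for $k=3$). Your write-up is in fact more careful than the paper's, which compresses the distinctness of the $v_j$ and both contradiction steps into a few terse sentences.
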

\begin{proof}
Suppose $G^f$ is not a tree. Then there exists a cycle $\{i_1,\ldots,i_s\}$ of length $s$ in $G^f$. We may assume that every proper subset of $\{i_1,\ldots,i_s\}$ is not a cycle. For each $i_j$, let $G_{i_j}\in \{G_1,\ldots,G_r\}$ be the corresponding clique in $G$. Then $V(G_{i_j})\cap V(G_{i_{j+1}})=\{v_{i_j}\}$ for $j=1,\ldots,s$ where $i_{s+1}=i_1$. Hence $C=\{v_{i_1},\ldots,v_{i_s}\}$ is a cycle of length $s$ in $G$ passing through all $G_{i_j}$ for $j=1,\ldots,s$. Note that $C$ is also chordal, since $G$ is chordal. Since every proper cycle of $C$ is not a cycle, it follows that $s=3$ and hence $C$ is a clique. This is a contradiction, since $|V(C)\cap V(G_{i_j})|>1$ for $j=1,2,3$.
\end{proof}
As a last result of this section we obtain as a special case Theorem 1.1 of \cite{HEH}.
\begin{Corollary}
  Let $G$ be a chordal graph and $G=G_1\cup\ldots\cup G_r$ such that $|V(G_i)\cap V(G_j)|\leq 1$ for $i\neq j\in\{1,\ldots,r\}$. Assume that $G_i$ is maximal clique for $i=1,\ldots,r$. Then the following statements are equivalent:
\begin{enumerate}
  \item[(a)] $J_G$ is Cohen-Macaulay;
  \item[(b)] $J_G$ is unmixed;
  \item[(c)] $V(G_i)\cap V(G_j)\cap V(G_k)=\emptyset$ for $i\neq j\neq k\in\{1,\ldots,r\}$.
\end{enumerate}
 \end{Corollary}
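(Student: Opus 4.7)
The plan is to prove (a) $\implies$ (b) $\implies$ (c) $\implies$ (a). Here (a) $\implies$ (b) is standard, and (c) $\implies$ (a) essentially assembles the paper's earlier results: given (c) together with the hypothesis $|V(G_i)\cap V(G_j)|\leq 1$, both parts of condition~(1) preceding Corollary~\ref{cor:fvi} hold, and condition~(2) is automatic because each $G_i$ is a clique, so $\Delta(G_i)$ has a unique facet and every vertex of $G_i$ is free in $\Delta(G_i)$. Lemma~\ref{lem:intree} then gives that $G^f$ is a tree, and since each $J_{G_i}$ is Cohen--Macaulay (being the ideal of $2$-minors of a generic $2\times|V(G_i)|$-matrix), Corollary~\ref{cor:fvi} yields~(a).

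For (b) $\implies$ (c) I argue by contraposition. If (c) fails, there exist pairwise distinct $i,j,k$ and a vertex $v$ with $v\in V(G_i)\cap V(G_j)\cap V(G_k)$; the pairwise intersection bound then forces $V(G_a)\cap V(G_b)=\{v\}$ for every two of these three cliques. I will take $T=\{v\}$ and show that $T\in\mathcal C(G)$ and $c(T)\geq 3$, which contradicts Lemma~\ref{lem:unmcomp} since $|T|+1=2$. Both claims reduce to the following separation property: whenever two distinct maximal cliques $G_a,G_b$ share exactly the vertex $v$, the sets $V(G_a)\setminus\{v\}$ and $V(G_b)\setminus\{v\}$ lie in distinct connected components of $G\setminus\{v\}$.

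The main obstacle is proving this separation property, and this is exactly where chordality enters. My plan is to suppose the property fails for some pair $(G_a,G_b)$ and, among all such failing pairs and all connecting paths in $G\setminus\{v\}$, take a shortest path $\pi\colon a=x_0,x_1,\dots,x_m=b$. If $m=1$ then $\{v,a,b\}$ is a triangle, hence contained in some maximal clique $G_p$, and $|V(G_p)\cap V(G_a)|\geq 2$ forces $G_p=G_a$, so $b\in V(G_a)\cap V(G_b)=\{v\}$, a contradiction. If $m\geq 2$, the cycle $v,x_0,\dots,x_m,v$ has length at least $4$ and, by chordality, admits a chord. Any chord of the form $\{x_p,x_q\}$ with $|p-q|\geq 2$ would shorten $\pi$ and contradict minimality, so every chord must be of the form $\{v,x_i\}$ with $1\leq i\leq m-1$. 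Letting $G_p$ be a maximal clique containing $v$ and $x_i$, a brief case split (on whether $G_p$ equals $G_a$, $G_b$, or neither, using the pairwise bound to ensure $V(G_p)\cap V(G_a)=\{v\}$ in the last case) produces a strictly shorter path connecting two cliques that meet only at $v$, again contradicting minimality.
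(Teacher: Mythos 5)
Your proposal is correct and follows essentially the same route as the paper: (a)$\Rightarrow$(b) is standard, (c)$\Rightarrow$(a) assembles Lemma~\ref{lem:intree} and Corollary~\ref{cor:fvi} (each $J_{G_i}$ being the Cohen--Macaulay ideal of $2$-minors of a generic $2\times|V(G_i)|$ matrix), and (b)$\Rightarrow$(c) uses the same witness $T=\{v\}$ with $c(T)\geq 3$ against Lemma~\ref{lem:unmcomp}. The only difference is that your shortest-path/chordality argument establishing the separation property is written out in full, whereas the paper simply asserts that a vertex lying in three maximal cliques has the cut point property with $c(T)\geq 3$; your extra detail is a correct justification of exactly that assertion.
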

\begin{proof}
(a) \implies (b) is known. By Lemma \ref{lem:intree}, (c) together with hypothesis implies that $G^f$ is a tree. Hence (c) \implies (a) is a special case of Corollary \ref{cor:cmequm}.

(b) \implies (c): Suppose $V(G_i)\cap V(G_j)\cap V(G_k)\neq\emptyset$ for $i\neq j\neq k\in\{1,\ldots,r\}$. Then there is a vertex $v$ of $G$ which is intersection of at least three maximal cliques. It follows that $T=\{v\}$ has cut point property for $G$. Hence $c(T)\geq 3$, a contradiction.

\end{proof}

\section{Binomial edge ideals and cones}
The main goal of this section is to study the unmixed and Cohen-Macaulay  property of the binomial edge ideal of the cone on a graph.�

\begin{Lemma}\label{lemm:cone1conn}
Let $H$ be a connected graph, and let
\[
G=\cone(v,H).
\]
Then
\[
\begin{array}{rcl}
 \mathcal C(G)&=&\{T\subset V(G):T=T'\cup\{v\}  \text{ with $T'\neq \emptyset$ and  $T'\in \mathcal C (H)$}\} \cup \{\emptyset\}.
\end{array}
 \]
Moreover, $\height P_T=\height P_{T'}+2$, for all $T\neq \emptyset.$
\end{Lemma}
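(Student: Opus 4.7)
The plan is to describe $\mathcal C(G)$ by first showing that every nonempty $T\in\mathcal C(G)$ must contain $v$, and then translating the cut-point condition for the remaining vertices in $T$ into the corresponding condition in $H$. The key structural feature to exploit is that $v$ is a \emph{universal} vertex of $G$, adjacent to every vertex of $V(H)$.

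Suppose $T\in\mathcal C(G)$ is nonempty and, for contradiction, that $v\notin T$. Pick any $i\in T$. Since $v\in \ol{T}\cup\{i\}$ and $v$ is adjacent to every other vertex of $G_{\ol{T}\cup\{i\}}$, the vertex $v$ is universal in this induced subgraph; removing $i$ leaves $G_{\ol{T}}$, which still contains $v$ as a universal vertex and is therefore connected. Hence $i$ is not a cut point, contradicting $T\in\mathcal C(G)$. So $v\in T$, and we may write $T=T'\cup\{v\}$ with $T'\subseteq V(H)$.

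Next I would analyze the cut-point conditions separately for $i\in T'$ and for $v$. For $i\in T'$ we have $v\notin\ol{T}\cup\{i\}$, so $G_{\ol{T}\cup\{i\}}=H_{\ol{T'}\cup\{i\}}$ (where $\ol{T'}$ is the complement in $V(H)$); therefore the cut-point condition for $i$ is identical in $G$ and in $H$. For $v$, the induced graph $G_{\ol{T}\cup\{v\}}$ is the cone from $v$ over $H_{\ol{T'}}$, and removing $v$ leaves $H_{\ol{T'}}$; hence $v$ is a cut point iff $c(H_{\ol{T'}})\geq 2$. If $T'=\emptyset$ this fails because $H$ is connected, forcing $T'\neq\emptyset$. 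Conversely, if $T'\in \mathcal C(H)$ and $T'\neq\emptyset$, then choosing any $i\in T'$ the cut-point property of $i$ yields $c(H_{\ol{T'}})>c(H_{\ol{T'}\cup\{i\}})\geq 1$, so $v$ is indeed a cut point. Putting the two directions together gives the claimed description of $\mathcal C(G)$.

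The height formula is then a direct computation. Since $|V(G)|=|V(H)|+1$, $|T|=|T'|+1$, and (because $v\in T$) $G_{\ol{T}}=H_{\ol{T'}}$ so that $c_G(T)=c_H(T')$, applying $\height P_T=|V(G)|+|T|-c(T)$ on both sides yields $\height P_T(G)=\height P_{T'}(H)+2$. The only mildly delicate point in the whole argument is verifying that $v$ really is a cut point for $T=T'\cup\{v\}$ when $T'\in\mathcal C(H)$ is nonempty, but as noted this reduces immediately to applying the cut-point property at any vertex of $T'$.
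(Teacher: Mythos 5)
Your proof is correct and follows essentially the same route as the paper's: both arguments rest on the universality of $v$ (forcing $v\in T$ for any nonempty $T\in\mathcal C(G)$ and ruling out $T=\{v\}$), the identification $G_{\ol{T}}=H_{\ol{T'}}$ which makes the cut-point conditions for $i\in T'$ coincide in $G$ and $H$, and the direct computation $\height P_T=n+|T|-c(T)$ for the height formula. No gaps; your explicit verification that $v$ is a cut point when $T'\in\mathcal C(H)$ is nonempty matches the paper's observation that $c(T)>1$ while the cone $G_{\ol{T}\cup\{v\}}$ is connected.
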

\begin{proof}
We assume $T=T'\cup\{v\}$ where $T'\in \mathcal C (H)$ and $T'\neq \emptyset$ and we want to show that $T\in \mathcal C(G)$. Note that $G_{\ol{T}}=H_{\ol{T'}}$. Hence for all $i\in T'$, $i$ is a cut point of $H_{\ol{T'}\cup\{i\}}=G_{\ol{T}\cup\{i\}}$. Let $i=v$ then $c(T\setminus \{v\})=1$ since $G$ is a cone defined on $v$. Hence $v$ is a cut point on the induced subgraph $G_{\ol{T}\cup\{v\}}$ since $c(T)>1$.

The other inclusion can be proved by similar arguments,  observing that
\begin{enumerate}
 \item since $H$ is connected, if $T=\{v\}$ then  $c(T)=c(T\backslash\{v\})=1$ and 
 \item if $v\not\in T\neq \emptyset$ then $c(T)=c(T\backslash \{i\})=1$ for all $i\in T$,
\end{enumerate}
hence $T$ does not have cut point property in both cases.

Consider $T\in\mathcal C(G)$ with $|T|>0$. It follows that $T=T'\cup\{v\}$ with $|T'|>0$. Since $G_{\ol{T}}=H_{\ol{T'}}$, we have $\height P_T=n+|T|-c(T)=n+|T'|+1-c(T')=\height P_{T'}+2$.
\end{proof}
\begin{Corollary}
Let $H$ be a connected graph, and let
\[
G=\cone(v,H)
\]
with $|V(G)|=n$.
Then
\[
\dim S/J_G= \max\{n+1,\dim S'/J_H\}
\]
where $S=K[\{x_i,y_i:i\in V(G)\}]$ and $S'=K[\{x_i,y_i:i\in V(H)\}]$.
\end{Corollary}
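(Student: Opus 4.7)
The plan is to exploit the identity $\dim S/J_G = \max_{T\in\mathcal C(G)} \dim S/P_T(G) = 2n - \min_{T\in\mathcal C(G)}\height P_T(G)$ and to match each $P_T(G)$ with either the prime $P_\emptyset(G)$ or a prime $P_{T'}(H)$ of $J_H$, using the preceding Lemma as the main structural input.

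First I would split the maximum according to the description $\mathcal C(G)=\{\emptyset\}\cup\{T'\cup\{v\}:T'\in\mathcal C(H),\ T'\neq\emptyset\}$ just proved. The contribution from $T=\emptyset$ is computed directly: since $H$ is connected, so is $G=\cone(v,H)$, hence $c(\emptyset)=1$ and $\height P_\emptyset(G)=n-1$, giving $\dim S/P_\emptyset(G)=n+1$. For $T=T'\cup\{v\}$ with $\emptyset\neq T'\in\mathcal C(H)$, the Lemma gives $\height P_T(G)=\height P_{T'}(H)+2$. Since $|V(H)|=n-1$ and $S'$ is a polynomial ring in $2(n-1)$ variables, this yields
\[
\dim S/P_T(G)=2n-\height P_{T'}(H)-2=2(n-1)-\height P_{T'}(H)=\dim S'/P_{T'}(H).
\]

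Combining these two cases, one obtains
\[
\dim S/J_G=\max\Bigl\{\,n+1,\ \max_{\substack{T'\in\mathcal C(H)\\ T'\neq\emptyset}}\dim S'/P_{T'}(H)\,\Bigr\}.
\]
The last step is to compare this with $\dim S'/J_H=\max_{T'\in\mathcal C(H)}\dim S'/P_{T'}(H)$. The only possible discrepancy comes from the term $T'=\emptyset$; since $H$ is connected, $\dim S'/P_\emptyset(H)=2(n-1)-(n-2)=n<n+1$, so omitting this term in the inner maximum can only decrease a quantity that is already $\le n+1$ in the omitted case. A one-line case distinction finishes it: if $\max_{T'\neq\emptyset}\dim S'/P_{T'}(H)\ge n$ then this quantity equals $\dim S'/J_H$ and the formula follows; otherwise $\dim S'/J_H=n\le n+1$ and the right-hand side is $n+1$, which is precisely $\dim S/J_G$.

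There is essentially no hard obstacle here; the Lemma has already done the combinatorial work of parametrising minimal primes, and the remainder is a bookkeeping computation with the classical formula $\height P_T = n+|T|-c(T)$. The only subtle point worth being careful about is the role of the empty set in $\mathcal C(H)$: it appears in the description of $\mathcal C(G)$ only through $T=\emptyset$ (not as $\{v\}$), and this is precisely why the answer involves $\max\{n+1,\dim S'/J_H\}$ rather than $\dim S'/J_H+2$.
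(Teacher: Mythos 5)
Your argument is correct and is exactly the derivation the paper intends: the Corollary is stated without proof as an immediate consequence of Lemma~\ref{lemm:cone1conn}, and your computation (splitting $\mathcal C(G)$ into $T=\emptyset$, which contributes $n+1$, and $T=T'\cup\{v\}$, which contributes $\dim S'/P_{T'}(H)$ via the height shift by $2$ against the two extra variables) is the natural way to fill in that omitted bookkeeping. Your careful handling of the missing $T'=\emptyset$ term, whose contribution $n$ is dominated by $n+1$, is the one point that needed saying, and you say it correctly.
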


 \begin{Theorem}\label{the:cone1conn}
 Let $H$ be a connected graph and assume that  $J_H$ is unmixed. Let
 $G=\cone(v,H)$.
 Then the following conditions are equivalent:
 \begin{enumerate}
  \item[(a)] $H$ is a complete graph;
  \item[(b)] $J_G$ is unmixed.
 \end{enumerate}
 If the equivalent conditions hold, then $J_G$ is Cohen-Macaulay.
 \end{Theorem}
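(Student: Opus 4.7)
The plan is to handle the two directions by different means, with the Cohen-Macaulay assertion falling out of the (a)$\Rightarrow$(b) direction for free. For (a)$\Rightarrow$(b), if $H$ is a complete graph on $m$ vertices, then $G=\cone(v,H)$ is a complete graph on $m+1$ vertices. Hence $J_G$ is the ideal of $2$-minors of a generic $2\times(m+1)$ matrix of indeterminates, which is classically known to be Cohen-Macaulay, hence unmixed. This simultaneously establishes the final ``Cohen-Macaulay'' clause of the theorem.

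For (b)$\Rightarrow$(a), the idea is to squeeze $\mathcal{C}(H)$ using Lemma~\ref{lem:unmcomp} applied to both $J_G$ and $J_H$. Assume $J_G$ is unmixed. By Lemma~\ref{lem:unmcomp}, every $T\in\mathcal{C}(G)$ satisfies $c_G(T)=|T|+1$. By Lemma~\ref{lemm:cone1conn}, every non-empty $T\in\mathcal{C}(G)$ has the form $T=T'\cup\{v\}$ for some non-empty $T'\in\mathcal{C}(H)$, and $G_{\ol{T}}=H_{\ol{T'}}$, so $c_G(T)=c_H(T')$. Combining these yields $c_H(T')=|T'|+2$ for every non-empty $T'\in\mathcal{C}(H)$. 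On the other hand, $J_H$ is unmixed by hypothesis, so Lemma~\ref{lem:unmcomp} forces $c_H(T')=|T'|+1$. The two equalities together are impossible, so $\mathcal{C}(H)=\{\emptyset\}$. By Proposition~\ref{prop:free0}, this is equivalent to saying that every vertex of $H$ is a free vertex of $\Delta(H)$.

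It remains to observe the elementary combinatorial fact that a connected graph $H$ all of whose vertices are free vertices of $\Delta(H)$ must be a complete graph. Indeed, suppose $\Delta(H)$ had two distinct facets $F_1,F_2$. Since $H$ is connected, there is a path in $H$ joining some $u\in F_1$ to some $w\in F_2$; walking along it we find an edge $\{a,b\}\in E(H)$ with $a\in F_1$ and $b\notin F_1$. Then $\{a,b\}$ is a clique and so lies in some facet $F_3\neq F_1$, whence $a$ belongs to two distinct facets, contradicting freeness. Thus $\Delta(H)$ has a single facet and $H$ is complete, finishing the proof.

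The main obstacle in writing this out is bookkeeping: one must apply the unmixedness criterion (Lemma~\ref{lem:unmcomp}) on both sides of the cone construction and then invoke the characterization of Lemma~\ref{lemm:cone1conn} to propagate the numerical equation from $\mathcal{C}(G)$ back to $\mathcal{C}(H)$. The final combinatorial step—deducing completeness from universal freeness and connectedness—is straightforward but deserves explicit mention because otherwise ``every vertex is free $\Rightarrow$ $H$ is complete'' is easy to assert without justification.
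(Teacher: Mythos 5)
Your proof is correct and follows essentially the same route as the paper: both arguments rest on Lemma~\ref{lemm:cone1conn} combined with the unmixedness criterion of Lemma~\ref{lem:unmcomp}, the paper merely running the (b)$\Rightarrow$(a) direction in contrapositive form ($H$ not complete $\Rightarrow$ there is a nonempty $T'\in\mathcal{C}(H)$ $\Rightarrow$ $\height P_{T'\cup\{v\}}=\height P_{T'}+2=n>n-1=\height P_{\emptyset}$). Your explicit verification that a connected graph all of whose vertices are free in $\Delta(H)$ must be complete is worth keeping, since the paper uses the equivalent fact (non-complete connected $\Rightarrow$ $\mathcal{C}(H)\neq\{\emptyset\}$) without justification.
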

 \begin{proof}
 (a) \implies  (b): If $H$ is a complete graph, then $G$ is also a complete graph, and hence $J_G$ is a prime ideal.

 (b) \implies (a): Let $|V(G)|=n$ and assume $H$ is not a complete graph. Then there exists $T'\in\mathcal C (H)$ with $T'\neq \emptyset$. By Lemma \ref{lem:unmcomp} the number of connected components of $H_{\ol{T'}}$ are $|T'|+1$ and $\height P_{T'}=n-2$. By Lemma \ref{lemm:cone1conn}, $P_T$ with $T=T'\cup\{v\}$ is a minimal prime ideal of $J_G$ and $\height P_T=\height P_{T'}+2=n$. Since $\height P_\emptyset(G)=n-1$ we obtain that $J_G$ is not unmixed.
 \end{proof}

If $J_G$ is unmixed then it does not implies that $J_H$ is unmixed. In \cite{HEH}, the authors give the following example (see Figure \ref{fan}) of the unmixed graph which is not Cohen-Macaulay and also $J_H$ is not unmixed by Lemma \ref{lem:unmcomp}.
\begin{figure}[hbt]
\begin{center}
\psset{unit=0.8cm}
\begin{pspicture}(0,0)(10,4)
\put(0.5,1){$H$}
\put(3,3.5){$v$}
\rput(1.5,0.5){$\bullet$}
\rput(2.5,1.5){$\bullet$}
\rput(1,2.5){$\bullet$}
\rput(4,2.5){$\bullet$}
\rput(2.5,3.5){$\bullet$}
\psline(1.5,0.5)(2.5,1.5)
\psline(2.5,1.5)(1,2.5)
\psline(2.5,1.5)(4,2.5)

\rput(6.5,0.5){$\bullet$}
\rput(7.5,1.5){$\bullet$}
\rput(6,2.5){$\bullet$}
\rput(9,2.5){$\bullet$}
\rput(7.5,3.5){$\bullet$}
\psline(6.5,0.5)(7.5,1.5)
\psline(7.5,1.5)(6,2.5)
\psline(7.5,1.5)(9,2.5)
\psline(7.5,3.5)(9,2.5)
\uput{0.2}[0](5.5,1){$G$}
\psline(7.5,3.5)(7.5,1.5)
\psline(7.5,3.5)(6,2.5)
\psline(7.5,3.5)(6.5,0.5)
\end{pspicture}
\end{center}
\caption{}\label{fan}
\end{figure}
\begin{Lemma}\label{cor:only2}
Let $H=\bigsqcup_{i=1}^r H_i$ be a graph with $H_i$ connected components with $r\geq 1$ and let
\[
G=\cone(v,H).
\]
If $J_G$ is unmixed then $H$ has at most two connected components.
\end{Lemma}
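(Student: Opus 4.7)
The plan is to exhibit two minimal primes of $J_G$ whose heights must coincide by unmixedness, and to show that this equality forces $r \le 2$. The two candidates are $P_\emptyset$ and $P_{\{v\}}$.

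\textbf{Step 1: Height of $P_\emptyset$.} Since every vertex of $H$ is adjacent to $v$ in $G = \cone(v,H)$, the graph $G$ is connected. Moreover $\emptyset \in \mathcal C(G)$ (vacuously), so by Lemma \ref{lem:cutpoint}, $P_\emptyset \in \MM(G)$. Letting $n = |V(G)|$, we have $c(\emptyset) = 1$, hence $\height P_\emptyset = n - 1$.

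\textbf{Step 2: $\{v\} \in \mathcal C(G)$ when $r \ge 2$.} Consider $T = \{v\}$. The induced graph $G_{\ol T \cup \{v\}}$ is $G$ itself, and $G \setminus \{v\} = H = \bigsqcup_{i=1}^r H_i$ has $r \ge 2$ components. Therefore $v$ is a cut point of $G$, so $T = \{v\}$ satisfies the cut-point property. By Lemma \ref{lem:cutpoint}, $P_{\{v\}} \in \MM(G)$.

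\textbf{Step 3: Height of $P_{\{v\}}$.} Since $G_{\ol{\{v\}}} = H$ has $r$ connected components, $c(\{v\}) = r$. The formula $\height P_T = n + |T| - c(T)$ yields
\[
\height P_{\{v\}} = n + 1 - r.
\]

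\textbf{Step 4: Conclusion.} By hypothesis $J_G$ is unmixed, so all minimal primes have the same height. Comparing the heights from Steps 1 and 3 gives
\[
n - 1 = n + 1 - r,
\]
which forces $r = 2$. Together with the case $r = 1$, this shows $H$ has at most two connected components.

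There is essentially no obstacle in this argument: the only subtle point is verifying that $\{v\}$ really does have the cut-point property (Step 2), which is immediate from the hypothesis $r \ge 2$ and the fact that $G$ is a cone over $v$.
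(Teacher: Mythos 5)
Your proof is correct and takes essentially the same route as the paper: both identify $T=\{v\}$ as having the cut point property when $r\geq 2$ and derive a contradiction with unmixedness when $r\geq 3$ (resp.\ force $r=2$). The only cosmetic difference is that the paper invokes Lemma \ref{lem:unmcomp} (the condition $c(T)=|T|+1$), whereas you compare the heights of $P_\emptyset$ and $P_{\{v\}}$ directly, which is the same computation unwound.
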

\begin{proof}
  Suppose $r\geq 3$. Then $v$ is a cut point of $G$. Let $T=\{v\}$. Since $T\in \mathcal C(G)$ by Lemma \ref{lem:unmcomp} it follows that $J_G$ is not unmixed.
\end{proof}

\begin{Lemma}\label{lemm:cone2conn}
Let $H=H_1\sqcup H_2$ such that $H_1$ and $H_2$ are connected graphs and let
\[
G=\cone(v,H).
\]
Then

\[
\mathcal C(G)=\{T\subset V(G):T=T_1\cup T_2\cup\{v\}, T_i\in \mathcal C (H_i) \text{ for } i=1,2\}
 \cup \{\emptyset\}.
 \]
Moreover, $\height P_T=\height P_{T_1}+\height P_{T_2}+2$, for all $T\neq \emptyset.$
\end{Lemma}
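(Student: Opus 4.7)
The plan is to follow the template of Lemma \ref{lemm:cone1conn}, the key new phenomenon being that when $H$ is disconnected, $v$ must lie in every nonempty $T \in \mathcal{C}(G)$.

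First I would handle the inclusion $\supseteq$: given $T = T_1 \cup T_2 \cup \{v\}$ with $T_i \in \mathcal{C}(H_i)$, check the cut point property element by element. For $u = v$, the graph $G_{\ol{T} \cup \{v\}}$ is the disjoint union $(H_1)_{\ol{T_1}} \sqcup (H_2)_{\ol{T_2}}$ (both summands nonempty, since each $H_i$ is connected and a set in $\mathcal{C}(H_i)$ cannot exhaust $V(H_i)$) glued only through $v$, so $v$ is a cut point. For $u \in T_1$, with the symmetric case handled analogously, the fact that $v \in T$ makes $G_{\ol{T} \cup \{u\}}$ reduce to $(H_1)_{\ol{T_1} \cup \{u\}} \sqcup (H_2)_{\ol{T_2}}$, and $u$ is a cut point of the first summand because $T_1 \in \mathcal{C}(H_1)$, hence a cut point of the disjoint union.

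For the reverse inclusion, the crucial step is showing $v \in T$ whenever $T \in \mathcal{C}(G)$ is nonempty. If instead $v \notin T$, then for any $u \in T$ the vertex $v$ lies in $G_{\ol{T} \cup \{u\}}$ and is adjacent to every other vertex there (since $G$ is the cone on $v$), so removing $u$ still leaves a connected graph, contradicting the cut point property of $u$. Once $v \in T$ is established, I would set $T_i = T \cap V(H_i)$ and apply the same decomposition $G_{\ol{T} \cup \{u\}} = (H_1)_{\ol{T_1} \cup \{u\}} \sqcup (H_2)_{\ol{T_2}}$; each $u \in T_i$ being a cut point of the ambient graph forces it to be a cut point of the $H_i$-summand, whence $T_i \in \mathcal{C}(H_i)$.

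The height formula then drops out of $\height P_T = n + |T| - c(T)$: with $n = |V(H_1)| + |V(H_2)| + 1$, $|T| = |T_1| + |T_2| + 1$, and $c(T) = c(T_1) + c(T_2)$ (the last identity because $G_{\ol{T}} = (H_1)_{\ol{T_1}} \sqcup (H_2)_{\ol{T_2}}$), a direct comparison with $\height P_{T_i} = |V(H_i)| + |T_i| - c(T_i)$ yields $\height P_T = \height P_{T_1} + \height P_{T_2} + 2$. I do not anticipate any serious obstacle; the only thing to monitor is the degenerate case $T = \{v\}$ (i.e.\ both $T_i = \emptyset$), which is immediate from the cone structure since removing $v$ from $G$ leaves precisely $H = H_1 \sqcup H_2$.
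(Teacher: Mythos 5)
Your proof is correct. The paper in fact states Lemma~\ref{lemm:cone2conn} without proof, and your argument is exactly the natural adaptation of the paper's proof of Lemma~\ref{lemm:cone1conn}: the inclusion $\supseteq$ is checked vertex by vertex, the inclusion $\subseteq$ hinges on the (correctly justified) observation that $v$ must belong to every nonempty $T\in\mathcal C(G)$ because $v$ is adjacent to all other vertices of $G_{\ol{T}\cup\{u\}}$, and the height formula follows from $\height P_T=n+|T|-c(T)$ together with $c(T)=c(T_1)+c(T_2)$; your attention to the degenerate cases ($T_i\neq V(H_i)$ and $T=\{v\}$) is also sound.
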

\begin{Corollary}\label{cor:dimn1}
Let $H=H_1\sqcup H_2$ such that $H_1$ and $H_2$ are connected graphs and let
\[
G=\cone(v,H).
\]
Then
\[
\dim S/J_G=\max\{\dim S_1/J_{H_1}+\dim S_2/J_{H_2},n+1\}
\]
where $S_i=K[\{x_j,y_j:j\in V(H_i)\}]$ for $i=1,2$ and $S=K[\{x_k,y_k:k\in V(G)\}]$.
\end{Corollary}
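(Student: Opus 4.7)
The plan is to translate both sides of the claimed equality into statements about minimal primes of the relevant binomial edge ideals and then read off the equality from Lemma \ref{lemm:cone2conn}. Since $\dim S/J_G$ equals $2n$ minus the smallest height of a minimal prime of $J_G$, and likewise for each $\dim S_i/J_{H_i}$, the strategy is simply to compute these minima.

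First I would set up the vertex counts. Put $n_i = |V(H_i)|$, so that $n_1 + n_2 = n - 1$ (since $V(G) = V(H_1) \sqcup V(H_2) \sqcup \{v\}$), and set
\[
h_i = \min_{T_i \in \mathcal C(H_i)} \height P_{T_i}(H_i),
\]
so that $\dim S_i/J_{H_i} = 2n_i - h_i$ for $i=1,2$. Next, by Lemma \ref{lemm:cone2conn} together with Lemma \ref{lem:cutpoint}, the minimal primes of $J_G$ are $P_\emptyset$ of height $n-1$, and the primes $P_T$ with $T = T_1 \cup T_2 \cup \{v\}$, $T_i \in \mathcal C(H_i)$, whose heights are $\height P_{T_1} + \height P_{T_2} + 2$. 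Hence
\[
\dim S/J_G = \max\Bigl\{\,2n-(n-1),\ \max_{T_1,T_2}\bigl(2n - \height P_{T_1} - \height P_{T_2} - 2\bigr)\Bigr\}.
\]

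The second key observation is that the minimization over $T_1$ and $T_2$ decouples, since $T_1 \in \mathcal C(H_1)$ and $T_2 \in \mathcal C(H_2)$ vary independently. Therefore
\[
\min_{T_1, T_2} \bigl(\height P_{T_1} + \height P_{T_2}\bigr) = h_1 + h_2,
\]
and substituting gives
\[
2n - h_1 - h_2 - 2 = 2(n_1+n_2) - h_1 - h_2 = (2n_1 - h_1) + (2n_2 - h_2) = \dim S_1/J_{H_1} + \dim S_2/J_{H_2}.
\]
Combined with the $T = \emptyset$ contribution $2n-(n-1) = n+1$, this yields the required formula. There is no serious obstacle here; the only bookkeeping point to watch is that the empty set lies in each $\mathcal C(H_i)$, so $h_i \le n_i - 1$ is always attainable, and the independence of $T_1$ and $T_2$ is what allows the min to split into a sum.
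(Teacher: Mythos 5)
Your proof is correct and is essentially the argument the paper intends: the corollary is stated as an immediate consequence of Lemma \ref{lemm:cone2conn} (the paper omits the details), and your computation --- identifying the minimal primes via Lemma \ref{lem:cutpoint}, using the height formula $\height P_T=\height P_{T_1}+\height P_{T_2}+2$, and splitting the minimization over the independently varying $T_1, T_2$ --- is exactly the bookkeeping needed. Nothing is missing.
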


\begin{Corollary}\label{cor:unmixed}\label{cor:dimh2}
Let $H=H_1\sqcup H_2$ such that $H_1$ and $H_2$ are connected graphs and let
\[
G=\cone(v,H).
\]
The following conditions are equivalent:
\begin{enumerate}
 \item[(a)] $J_{H_1}$ and $J_{H_2}$ are unmixed;
 \item[(b)] $J_G$ is unmixed.
\end{enumerate}
\end{Corollary}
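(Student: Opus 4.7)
The plan is to combine Lemma \ref{lem:unmcomp} (the combinatorial characterization of unmixedness: $J_{\Gamma}$ is unmixed iff $c(T)=|T|+1$ for every $T\in \mathcal C(\Gamma)$, equivalently $\height P_T(\Gamma)=|V(\Gamma)|-1$ for every such $T$) with Lemma \ref{lemm:cone2conn}, which completely describes $\mathcal C(G)$ and gives the additive height formula $\height P_T(G)=\height P_{T_1}(H_1)+\height P_{T_2}(H_2)+2$ for $T=T_1\cup T_2\cup\{v\}\neq\emptyset$. Set $n_i=|V(H_i)|$, so that $n=|V(G)|=n_1+n_2+1$. Since $H_1, H_2, G$ are all connected (for $G$, because $v$ is joined to everything), the condition from Lemma \ref{lem:unmcomp} becomes $\height P_{T_i}(H_i)=n_i-1$ for all $T_i\in\mathcal C(H_i)$, respectively $\height P_T(G)=n-1=n_1+n_2$ for all $T\in\mathcal C(G)$.

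For (a) $\Rightarrow$ (b): I would take an arbitrary nonempty $T\in\mathcal C(G)$ and write $T=T_1\cup T_2\cup\{v\}$ as in Lemma \ref{lemm:cone2conn}. Applying the unmixedness of $J_{H_i}$ gives $\height P_{T_i}(H_i)=n_i-1$ for $i=1,2$, hence
\[
\height P_T(G)=\height P_{T_1}(H_1)+\height P_{T_2}(H_2)+2=(n_1-1)+(n_2-1)+2=n_1+n_2=n-1,
\]
which matches $\height P_\emptyset(G)=n-1$. Thus $J_G$ is unmixed.

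For (b) $\Rightarrow$ (a): the idea is to isolate one component by choosing the other parameter to be empty. Given $T_1\in\mathcal C(H_1)$, set $T=T_1\cup\{v\}$; since $\emptyset\in\mathcal C(H_2)$, this $T$ lies in $\mathcal C(G)$ by Lemma \ref{lemm:cone2conn}. The height formula and unmixedness of $J_G$ force
\[
n_1+n_2=\height P_T(G)=\height P_{T_1}(H_1)+\height P_\emptyset(H_2)+2=\height P_{T_1}(H_1)+(n_2-1)+2,
\]
so $\height P_{T_1}(H_1)=n_1-1$. Since $T_1$ was arbitrary, $J_{H_1}$ is unmixed; by symmetry $J_{H_2}$ is unmixed.

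There is no real obstacle here: both implications are straightforward bookkeeping once Lemma \ref{lemm:cone2conn} is in hand. The only minor subtlety is to notice that $T_1$ or $T_2$ is allowed to be empty in the description of $\mathcal C(G)$ (because $\emptyset\in\mathcal C(H_i)$), which is exactly what makes the ``decoupling'' argument in (b) $\Rightarrow$ (a) work.
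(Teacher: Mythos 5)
Your proof is correct and follows essentially the same route as the paper's: both rest on Lemma \ref{lemm:cone2conn}'s description of $\mathcal C(G)$ together with Lemma \ref{lem:unmcomp}, and both prove the converse by ``decoupling'' with $T_2=\emptyset$. The only cosmetic difference is that you phrase the bookkeeping via the height formula $\height P_T(G)=\height P_{T_1}(H_1)+\height P_{T_2}(H_2)+2$, whereas the paper works with the equivalent identity $c(T)=c(T_1)+c(T_2)$ and the criterion $c(T)=|T|+1$.
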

\begin{proof}

We may focus our attention on $T\in \mathcal C (G)$ with $T\neq \emptyset$. By Lemma \ref{lemm:cone2conn},  $T=\{v\}\cup T_1\cup T_2$ where $T_1\in \mathcal C (H_1)$, $T_2\in \mathcal C (H_2)$. We observe that
\begin{equation}\label{eq:conn}
 c(T)=c(T_1)+c(T_2)
\end{equation}
where $c(T_i)$ is the number of connected components of  $(H_i)_{\ol{T}_i}$ with $i=1,2$.

(a) \implies (b):  It follows from Lemma \ref{lem:unmcomp} that $c(T_i)=|T_i|+1$ for $i=1,2$. By putting these values in Equation \eqref{eq:conn}, we get the result by Lemma \ref{lem:unmcomp}.

(b) \implies (a):  Suppose $J_{H_1}$ is not unmixed. Then there exists $T_1\in\mathcal C(H_1)$ such that $c(T_1)\neq |T_1|+1$. Let $T=\{v\}\cup T_1$. Lemma \ref{lemm:cone2conn} implies that $T\in\mathcal C(G)$. Since $c(T_2=\emptyset)=1$ we obtain a contradiction by \eqref{eq:conn}.
\end{proof}
\begin{Theorem}\label{theo:cone2CM}
Let $H=H_1\sqcup H_2$ such that $H_1$ and $H_2$ are connected graphs and  let
\[
G=\cone(v,H).
\]
If $J_{H_1}$ and $J_{H_2}$ are Cohen-Macaulay then $J_G$ is Cohen-Macaulay.
\end{Theorem}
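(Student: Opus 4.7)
The plan is to prove $\depth S/J_G = \dim S/J_G = n+1$, where $n=|V(G)|=m_1+m_2+1$ and $m_i=|V(H_i)|$, by decomposing $J_G = I_1\cap I_2$ as an intersection of two ideals whose quotients are Cohen--Macaulay of depth $n+1$, and then extracting the depth estimate from the Mayer--Vietoris short exact sequence together with the depth lemma. The dimension side is already settled by Corollary~\ref{cor:dimn1}: since $H_i$ is connected and $J_{H_i}$ is Cohen--Macaulay we have $\dim S_i/J_{H_i} = m_i+1$, so $\dim S/J_G = \max\{(m_1+1)+(m_2+1),\, n+1\} = n+1$.

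The first step is to simplify the primary decomposition furnished by Lemma~\ref{lemm:cone2conn}. For $T = T_1\cup T_2\cup\{v\}$ with $T_i\in\mathcal{C}(H_i)$, removing $v$ splits the cone into the two disjoint pieces $(H_1)_{\ol{T_1}}\sqcup(H_2)_{\ol{T_2}}$, so the definition \eqref{eq:prime} yields $P_T(G)=(x_v,y_v)+P_{T_1}(H_1)+P_{T_2}(H_2)$. Each such prime contains $(x_v,y_v)$, so intersecting them and passing to the quotient modulo $(x_v,y_v)$ reduces the problem to computing $\bigcap_{T_1,T_2}\bigl(P_{T_1}(H_1)+P_{T_2}(H_2)\bigr)$ in $R:=K[\{x_i,y_i:i\in V(H)\}]$. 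Because $V(H_1)$ and $V(H_2)$ use disjoint variable sets, the ideals $P_{T_1}(H_1)$ and $P_{T_2}(H_2)$ are extended from the disjoint polynomial subrings $S_1,S_2\subseteq R=S_1\otimes_K S_2$; a standard flatness argument over $K$ lets the intersection distribute across the sum, giving $\bigl(\bigcap_{T_1}P_{T_1}(H_1)\bigr)+\bigl(\bigcap_{T_2}P_{T_2}(H_2)\bigr)=J_{H_1}+J_{H_2}=J_H$. Therefore
\[
J_G \;=\; I_1\cap I_2, \qquad I_1:=P_\emptyset(G),\quad I_2:=(x_v,y_v)+J_H.
\]

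Next I would compute the depths of the three quotients. The ideal $I_1$ is generated by the $2$-minors of a generic $2\times n$ matrix of indeterminates, so $S/I_1$ is the classical determinantal ring, Cohen--Macaulay with $\depth S/I_1=n+1$. For $S/I_2$, the variables $x_v,y_v$ appear only linearly, so $S/I_2\cong R/J_H\cong(S_1/J_{H_1})\otimes_K(S_2/J_{H_2})$; both tensor factors are Cohen--Macaulay by hypothesis, so the tensor product is Cohen--Macaulay with $\depth S/I_2=(m_1+1)+(m_2+1)=n+1$. Finally $J_H\subseteq J_{\wt H}$ where $\wt H$ is the complete graph on $V(H)$, so $I_1+I_2=(x_v,y_v)+J_{\wt H}$, and $S/(I_1+I_2)$ is again a $2$-minor ring, Cohen--Macaulay with depth $(n-1)+1=n$.

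Plugging these into the Mayer--Vietoris short exact sequence
\[
0\To S/J_G \To S/I_1\oplus S/I_2 \To S/(I_1+I_2)\To 0
\]
and invoking the depth lemma yields
\[
\depth S/J_G \;\geq\; \min\bigl\{\min(\depth S/I_1,\,\depth S/I_2),\;\depth S/(I_1+I_2)+1\bigr\} \;=\; n+1,
\]
so $\depth S/J_G = \dim S/J_G = n+1$ and $J_G$ is Cohen--Macaulay. The delicate step in this plan is the identification $J_G=I_1\cap I_2$: one must justify carefully that the intersection of the primes containing $(x_v,y_v)$ distributes correctly across the disjoint variable sets attached to $V(H_1)$ and $V(H_2)$. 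After that, the argument reduces to depth bookkeeping together with standard facts about determinantal rings and tensor products of graded Cohen--Macaulay $K$-algebras.
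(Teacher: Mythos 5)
Your proposal is correct and follows essentially the same route as the paper: the same decomposition $J_G=P_\emptyset(G)\cap\bigl((x_v,y_v)+J_H\bigr)$ obtained from Lemma~\ref{lemm:cone2conn}, the same depth counts $n+1$, $n+1$, $n$ for the two pieces and their sum, and the same conclusion via the Mayer--Vietoris sequence, the depth lemma, and Corollary~\ref{cor:dimn1}. The only difference is cosmetic: you justify $Q_2=(x_v,y_v)+J_H$ by distributing the intersection over the tensor decomposition $S_1\otimes_K S_2$, where the paper reads it off directly from $P_T(G)=(x_v,y_v)+P_{T\setminus\{v\}}(H)$.
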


\begin{proof}
Since $H=H_1\sqcup H_2$, $J_{H_1}$ and $J_{H_2}$ are Cohen-Macaulay if and only if $J_{H}$ is Cohen-Macaulay. Let $|V(G)|=n$. Let $T\in \mathcal C(G)$. By Lemma \ref{lemm:cone2conn} we have two cases:
\begin{enumerate}
 \item $T=\emptyset$;
 \item $T=\{v\}\cup T_1\cup T_2$ with $T_1\in \mathcal C(H_1)$, $T_2\in \mathcal C(H_2)$.
\end{enumerate}
Hence we have $J_G=Q_1\cap Q_2,$ where
\begin{eqnarray}
 Q_1 & = & \bigcap_{T\in \mathcal C(G),\,v\not\in T}P_T(G)=P_{\emptyset}(G),\\
 Q_2 & = & \bigcap_{T\in \mathcal C(G),\,v\in T}P_T(G).
\end{eqnarray}
Since $P_{\emptyset}(G)$ is the ideal generated by all $2\times 2$-minors, we obtain that $\depth S/Q_1=n+1$. With respect to $Q_2$, we observe that for all $T\subset V(G)$ with $v\in T$, we have $P_T(G)=(x_v,y_v)+P_{T\setminus \{v\}}(H)$. It follows that $Q_2=(x_v,y_v)+J_{H}.$ Let $S_1$ be the polynomial ring $S/(x_v,y_v)$. Then $S/Q_2\simeq S_1/J_{H}$ where $J_H$ is a Cohen-Macaulay binomial edge ideal with $2$ connected components. Hence $\depth S_1/J_H=(n-1)+2=n+1$. We also observe that $Q_1+Q_2=P_{\emptyset}(G)+(x_v,y_v)+J_H=P_{\emptyset}(G)+(x_v,y_v)$. Thus $S/(Q_1+Q_2)\simeq S_1/J_{\wt{H}}$, where $\wt{H}$ is the complete graph on $n-1$ vertices. Hence $\depth S/(Q_1+Q_2)=n$. By the following exact sequence
\[
0\longrightarrow S/J_{G}\longrightarrow S/Q_1\oplus S/Q_2\longrightarrow S/Q_1+Q_2\longrightarrow 0.
\]
and depth lemma we obtain that $\depth S/J_{G}= n+1$. By Corollary \ref{cor:dimn1}, we get $\dim S/J_G=n+1$.
\end{proof}

\begin{Example}
 Let $H=H_1\sqcup H_2$ be a graph where $H_1$ and $H_2$ are path graphs of length two. Let $v$ be an isolated vertex. By Theorem \ref{theo:cone2CM} the graph $G=cone(v,H)$ is a Cohen-Macaulay (see Figure \ref{cone}).

\begin{figure}[hbt]
\begin{center}

\psset{unit=0.8cm}

\begin{pspicture}(0,0)(11,5)

\rput(1,2){$\bullet$}
\rput(2,1){$\bullet$}
\rput(2,3){$\bullet$}
\rput(3,2){$\bullet$}
\rput(4,1){$\bullet$}
\rput(4,3){$\bullet$}
\rput(5,2){$\bullet$}

\put(1,.7){$H_1$}
\put(2.8,1.4){$v$}
\put(4.4,.7){$H_2$}

\psline(1,2)(2,3)
\psline(1,2)(2,1)

\psline(4,1)(5,2)

\psline(4,3)(5,2)

\put(5.8,1.8){$\Rightarrow$}

\rput(7,2){$\bullet$}
\rput(8,1){$\bullet$}
\rput(8,3){$\bullet$}
\rput(9,2){$\bullet$}
\rput(10,1){$\bullet$}
\rput(10,3){$\bullet$}
\rput(11,2){$\bullet$}

\psline(7,2)(8,3)
\psline(7,2)(9,2)
\psline(7,2)(8,1)
\psline(8,1)(9,2)
\psline(8,3)(9,2)

\psline(9,2)(10,3)

\psline(9,2)(11,2)
\psline(9,2)(10,1)
\psline(10,1)(11,2)

\psline(10,3)(11,2)

\put(8.7,.7){$G$}

\end{pspicture}
 
\end{center}
\caption{}\label{cone} 
\end{figure}

\end{Example}
\bibliographystyle{plain}

\end{document}